\documentclass[11pt]{amsart}
\usepackage{amsmath,amsfonts,amsthm,mathrsfs}
\usepackage{amssymb}

\usepackage[unicode,bookmarks]{hyperref}
\usepackage[dvipsnames]{xcolor}
\hypersetup{colorlinks=true,citecolor=NavyBlue,linkcolor=Brown,urlcolor=Orange}

\usepackage{tikz-cd,adjustbox}
\usepackage[arrow,curve,matrix]{xy}

\newif\ifdraft
\draftfalse

\counterwithin{equation}{subsection}
\counterwithout{subsection}{section}
\counterwithin{figure}{subsection}

\newtheorem{theorem}[equation]{Theorem}
\newtheorem*{theorem*}{Theorem}
\newtheorem{lemma}[equation]{Lemma}
\newtheorem*{lemma*}{Lemma}
\newtheorem{corollary}[equation]{Corollary}
\newtheorem{proposition}[equation]{Proposition}
\newtheorem*{proposition*}{Proposition}

\theoremstyle{definition}
\newtheorem{definition}[equation]{Definition}
\newtheorem*{definition*}{Definition}
\newtheorem{remark}[equation]{Remark}

\newtheorem{example}[equation]{Example}
\newtheorem*{example*}{Example}
\newtheorem*{problem*}{Problem}

\theoremstyle{plain}

\newcounter{intro}

\newtheorem{intro-conjecture}[intro]{Conjecture}
\newtheorem{intro-corollary}[intro]{Corollary}
\newtheorem{intro-theorem}[intro]{Theorem}
\newtheorem{intro-proposition}[intro]{Proposition}

\newcommand{\theoremref}[1]{\hyperref[#1]{Theorem~\ref*{#1}}}
\newcommand{\lemmaref}[1]{\hyperref[#1]{Lemma~\ref*{#1}}}
\newcommand{\definitionref}[1]{\hyperref[#1]{Definition~\ref*{#1}}}
\newcommand{\propositionref}[1]{\hyperref[#1]{Proposition~\ref*{#1}}}
\newcommand{\conjectureref}[1]{\hyperref[#1]{Conjecture~\ref*{#1}}}
\newcommand{\corollaryref}[1]{\hyperref[#1]{Corollary~\ref*{#1}}}
\newcommand{\exampleref}[1]{\hyperref[#1]{Example~\ref*{#1}}}

\DeclareMathOperator{\Pic}{Pic}

\DeclareMathOperator{\gr}{gr}
\DeclareMathOperator{\MHM}{MHM}
\DeclareMathOperator{\dr}{DR}
\DeclareMathOperator{\ic}{IC}

\DeclareMathOperator{\lcdef}{lcdef}
\DeclareMathOperator{\cdef}{cdef}
\DeclareMathOperator{\pdef}{pdef}
\DeclareMathOperator{\Supp}{Supp}
\DeclareMathOperator{\depth}{depth}

\DeclareMathOperator{\Cone}{Cone}
\DeclareMathOperator{\coh}{coh}

\DeclareMathOperator{\red}{red}
\DeclareMathOperator{\anloc}{an-loc}
\DeclareMathOperator{\rat}{rat}

\DeclareMathOperator{\codim}{codim}
\DeclareMathOperator{\RHD}{RHD}

\makeatletter
\let\old@caption\caption
\renewcommand*{\caption}[1]{%
	\setcounter{figure}{\value{equation}}%
	\stepcounter{equation}%
	\old@caption{#1}\relax%
}
\makeatother

\begin{document}

\title{Local Cohomological Defect and a Conjecture of Musta{\c{t}}{\u{a}}-Popa}

\author{Andrew Burke}
\address{Department of Mathematics, Harvard University, 1 Oxford Street, Cambridge, MA 02138, USA}
\email{aburke@math.harvard.edu}

\begin{abstract}
We prove a general result on the depth of Du Bois complexes of a singular variety. We apply it to prove a conjecture of Musta{\c{t}}{\u{a}}-Popa and to study the local cohomological defect, extending results of Ogus and Dao-Takagi over the complex numbers.
\end{abstract}

\maketitle

\setcounter{tocdepth}{1}

\section{Introduction}

The present paper is dedicated to new results around the local cohomological defect, a singularity invariant which has received growing interest in recent years. It admits topological \cite{bbl}, \cite{ogus}, \cite{saito lcdef} and holomorphic characterizations \cite{local cohomology}. Additionally, it controls Lefschetz-type theorems on singular varieties \cite{popa park lefschetz}. 

Throughout, let $X$ be a complex algebraic variety of dimension $n$. If $X$ is embedded in a smooth variety $Y$ of dimension $d$, consider the local cohomology sheaves $\mathcal{H}^q_X(\mathscr{O}_Y)$. The first nonzero sheaf occurs at $q=c$, where $c = d-n$. We define the \emph{local cohomological defect}
\begin{align*}
    \lcdef(X) = \max \{ q : \mathcal{H}^q_X( \mathscr{O}_Y) \neq 0 \} - c.
\end{align*}
It satisfies $0 \leq \lcdef(X) \leq n$. If $X$ has quotient or local complete intersection singularities, then $\lcdef(X) = 0$; see \cite[Example $2.4$]{popa shen} for more examples.

One can express $\lcdef(X)$ in terms of more refined holomorphic invariants. Denote by $\underline{\Omega}_X^p \in D^b_{\coh}(X)$ the \emph{$p$-th Du Bois complex} \cite{du bois} of $X$. This is a Hodge-theoretic replacement for the sheaf of holomorphic $p$-forms on a smooth variety, which can be computed in terms of a hyperresolution. Then \cite[Theorem E]{local cohomology} proves

\begin{align}\label{intro lcdef depth}
    \lcdef(X) = n - \min_{0 \leq p \leq n} \{ \depth \underline{\Omega}_X^p + p \}.
\end{align}
In particular, $\lcdef(X)$ is independent of embedding. A priori, one must compute $\depth \underline{\Omega}_X^p$ for all $p \leq n$ to specify $\lcdef(X)$. An application of the main result of this paper is that it suffices to check only $p \leq \lceil (n-3)/2 \rceil$. In fact, more is true:

\begin{intro-theorem}\label{intro thm easy}
Let $k$ be an integer. Then we have $\lcdef (X) \leq n-k$ if and only if 
$\depth \underline{\Omega}_X^p + p \geq k$ for all $p \leq \lceil (k-3)/2 \rceil$.
\end{intro-theorem}

When $k \leq 3$, we recover (slight extensions of) well-known results of Ogus \cite{ogus} and Dao-Takagi \cite{dao takagi}: if $\depth \mathscr{O}_X \geq k$, then $\lcdef(X) \leq n-k$. Note that $\depth \underline{\Omega}_X^0 \geq \depth \mathscr{O}_X$ by \cite{kovacs schwede}. Example \ref{exmp beat dao takagi} shows statements involving $\depth \underline{\Omega}_X^0$ are strictly stronger than those involving $\depth \mathscr{O}_X$. The obvious pattern fails beyond $k = 3$; see \cite[Example $2.11$]{dao takagi}. Our result explains this failure and completes the picture.

Theorem \ref{intro thm easy} is a consequence of a more general result about the depth of Du Bois complexes, which is our main theorem.

\begin{intro-theorem}\label{intro thm main}
Let $k$ and $m$ be integers. If
\begin{align*}
    \depth \underline{\Omega}_X^p + p \geq k \hspace{.2cm} \text{ for all } p \leq m,
\end{align*}
then the same inequality holds for $p \geq k-m-2$.
\end{intro-theorem}

Another application of Theorem \ref{intro thm main} is the proof of a conjecture of Musta{\c{t}}{\u{a}}-Popa, which arose from trying to place the results of Ogus and Dao-Takagi in the context of the Hodge filtration on local cohomology in \cite{local cohomology}. Suppose $X$ is embedded in a smooth variety $Y$ of dimension $d$. Choose a log resolution $f:Z \rightarrow Y$ of the pair $(Y, X)$, assumed to be an isomorphism away from $X$. Denote $E = f^{-1}(X)_{\red}$, which is a simple normal crossings divisor on $Z$.

\begin{intro-conjecture}\cite[Conjecture $14.1$]{local cohomology}\label{intro conj}
If $\depth \mathscr{O}_X  \geq j+2$, then $R^{d-2} f_* \Omega_Y^{d-j} ( \log E) = 0$.
\end{intro-conjecture}

Although it is phrased without reference to Hodge theory, the conclusion is equivalent to $\depth \underline{\Omega}_X^j \geq 2$ by \cite[Lemma $14.4$]{local cohomology}. Plugging $m = 0$ into Theorem \ref{intro thm main} gives the following slightly stronger statement.

\begin{intro-corollary}\label{intro cor answering conj}
If $\depth \underline{\Omega}_X^0 \geq j+2$, then $\depth \underline{\Omega}_X^j \geq 2$.
\end{intro-corollary}
The conjecture was previously known when $X$ has isolated singularities \cite[Theorem $14.3$]{local cohomology} or rational singularities outside of a finite set of points \cite[Theorem $0.7$]{weaklyratl}. 

Theorem \ref{intro thm easy} also implies a bound for the local cohomological defect in Corollary \ref{cor lcdef vs q} in terms of the smallest index $p(X)$ for which the minimum in \ref{intro lcdef depth} is obtained. This strictly improves the bound of \cite[Corollary $7.5$]{popa park lefschetz} for $\lcdef(X)$ (see Example \ref{exmp diamond}), and it sometimes determines which $\depth \underline{\Omega}_X^p$ computes $\lcdef(X)$.

\begin{intro-theorem}\label{intro dt thm}
Let $m$ be an integer and suppose $\depth \underline{\Omega}_X^p +p \geq n$ for all $p \leq m-1$. If $\depth \underline{\Omega}_X^m \leq m+3$, then
\begin{align*}
    \lcdef(X) = \max \{ 0, n-\depth \underline{\Omega}_X^m -m \}.
\end{align*}
The first assumption holds if $X$ has pre-$(m-1)$-rational singularities.
\end{intro-theorem}

In particular, $\lcdef(X) = \max \{ 0, n-\depth \underline{\Omega}_X^0 \}$ when $\depth \underline{\Omega}_X^0 \leq 3$. This equality fails beyond depth three. Nevertheless, Dao-Takagi \cite{dao takagi} prove when $\depth \mathscr{O}_X \geq 4$ that $\lcdef(X) \leq n-4$ if and only if $X$ has torsion local analytic Picard groups. More generally, our final result is a technical criterion for $\lcdef(X) \leq n-k$, assuming $\depth \underline{\Omega}_X^p + p \geq k$ for $p \leq m$, where $m$ is fixed. It is phrased in terms of a family of invariants parameterized by integers $0 \leq d \leq n$, which we call the \emph{perversity defects} $\pdef(X, d)$ of $X$ (see Definition \ref{defn pdef}). For example, $\pdef(X,n) = \lcdef(X)$, while $\pdef(X,0)$, called the \emph{constructible defect}, is a measure of the constructible cohomologies of $\mathbb{D} \mathbb{Q}_X$. These invariants can be expressed in terms of the cohomology of small punctured neighborhoods of points, as in Lemma \ref{lemma link}.

\begin{intro-theorem}\label{intro thm pdef}
Let $k$ and $m$ be integers. We have $\lcdef(X) \leq n-k$ if and only if $\depth \underline{\Omega}_X^p + p \geq k$ for all $p \leq m$ and $\pdef(X, k-2m-4) \leq n-k$.
\end{intro-theorem}

Theorem \ref{intro thm pdef} implies the aforementioned result of Dao-Takagi (see Example \ref{example dt}) and an analogous criteria for $\lcdef(X) \leq n-5$ by Park-Popa \cite[Corollary G]{popa park lefschetz} (see Example \ref{exmp lcdef 5}). 

Our approach relies throughout on the theory of mixed Hodge modules. The main technical result is Proposition \ref{prop main}, an abstract vanishing statement for graded de Rham pieces of mixed Hodge modules. It yields our main Theorem \ref{intro thm main}, because Du Bois complexes can be identified with shifts of graded de Rham pieces of the trivial Hodge module. We also provide a direct explanation (Theorem \ref{thm lcdef depth body}) for the equivalence between the topological and holomorphic characterizations of $\lcdef(X)$, as was asked about in \cite[Remark $11.2$]{local cohomology}.

\subsection*{Acknowledgements}
I would like to express my sincere thanks to my advisor, Mihnea Popa, for his unwavering support and guidance. I also want to thank Hyunsuk Kim, Sung Gi Park, Wanchun Shen, and Anh Duc Vo for helpful discussions.

\section{Preliminaries}

\subsection{Mixed Hodge modules}
This paper will extensively use the language of mixed Hodge modules \cite{saito 88, saito 90}. We think of a mixed Hodge module as a perverse sheaf equipped with additional structure, including a Hodge filtration and a weight filtration. The category $\MHM(X)$ of mixed Hodge modules is abelian, so it admits a derived category $D^b(\MHM(X))$.

There is a faithful realization functor 
\begin{align*}
    \rat_X: D^b(\MHM(X)) \rightarrow D^b_{c}(X)
\end{align*}
sending a mixed Hodge module to its underlying perverse sheaf, considered as a complex of constructible sheaves. The usual functors $f_*, f^*, f_!, f^!, \otimes$, and $\mathbb{D}$ on $D^b_c(X)$ lift to $D^b(\MHM(X))$.

Let $M$ be a mixed Hodge module on $X$, and suppose $X$ is locally embedded in a smooth variety $Y$ of dimension $d$. Denote by $\mathcal{M}$ the $\mathcal{D}_Y$-module corresponding to the realization of $M$ under the Riemann-Hilbert correspondence. The Hodge filtration is a filtration $F_\bullet \mathcal{M}$ on $\mathcal{M}$. Let us emphasize that the filtered object $F_\bullet \mathcal{M}$ is defined on the ambient space $Y$, not on $X$. The more invariant global objects to consider are the graded de Rham pieces of $M$, defined locally by the formula
\begin{align}\label{eqn grDR}
    \gr_{p}^F \dr(M) = \big( \gr_{p-d}^F M \otimes \wedge^m T_Y \rightarrow \ldots \rightarrow \gr_{p-1}^F M \otimes T_Y \rightarrow \gr_{p}^F M \big).
\end{align}
They are well-defined complexes of coherent sheaves on $X$ by \cite[Proposition $2.33$]{saito 90}. Moreover, $\gr_p^F \dr$ is an exact functor, so it extends to 
\begin{align*}
    \gr_p^F \dr : D^b(\MHM(X)) \rightarrow D^b_{\coh}(X).
\end{align*}
One computes $\gr_p^F \dr( M^\bullet) $ for a complex $M^\bullet$ by applying \ref{eqn grDR} to each term.

Mixed Hodge modules are also equipped with a weight filtration $W_\bullet$ on the underlying perverse sheaf. One says a complex $M^\bullet \in D^b(\MHM(X))$ has weights $\geq c$ (resp. $\leq c$) if $\gr_\ell^W \mathcal{H}^j M^\bullet = 0$ for all $\ell - j < c$ (resp. $\ell - j \geq c$).

\subsection{Trivial Hodge module}

Let $X$ be a complex algebraic variety of dimension $n$. We define the \emph{trivial Hodge module} $\mathbb{Q}_X^H[n] \in D^b(\MHM(X))$ to be the complex $a^* \mathbb{Q}^H[n]$, where $a: X \rightarrow \mathop{\mathrm{Spec}} \mathbb{C}$. It has cohomologies in degrees $\leq 0$ and has weights $\leq n$. 

The \emph{intersection cohomology Hodge module} $\ic_X^H$ is $\gr_n^W \mathcal{H}^0 \mathbb{Q}_X^H[n]$, a pure Hodge module  of weight $n$ (see \cite[$4.5$]{saito 90}). Denote by $\gamma_X$ the composition of the natural morphisms $\mathbb{Q}_X^H[n] \rightarrow \mathcal{H}^0 \mathbb{Q}_X^H[n] \rightarrow \gr_n^W \mathcal{H}^0 \mathbb{Q}_X^H[n] = \ic_X^H$. The \emph{RHM-defect object} $\mathcal{K}_X^\bullet \in D^b(\MHM(X))$ is the complex sitting in the distinguished triangle
\begin{align}\label{eqn rhm triangle}
    \mathcal{K}_X^\bullet \rightarrow \mathbb{Q}_X^H[n] \xrightarrow[]{\gamma_X} \ic_X^H \xrightarrow[]{+1}
\end{align}
See \cite[Definition $6.1$]{popa park lefschetz} and \cite[Proposition $6.4$]{popa park lefschetz}, where the following is proved.

\begin{lemma}\label{lemma wt n-1}
$\mathcal{K}_X^\bullet$ has weights $\leq (n-1)$
\end{lemma}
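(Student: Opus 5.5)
We will read off the weights of $\mathcal{K}_X^\bullet$ from the long exact sequence of cohomology mixed Hodge modules attached to the triangle \ref{eqn rhm triangle}. Two facts about the other terms are used: $\mathbb{Q}_X^H[n]$ has cohomology only in degrees $\leq 0$ and weights $\leq n$, meaning $\gr^W_\ell \mathcal{H}^j \mathbb{Q}_X^H[n] = 0$ for $\ell > n+j$; and $\ic_X^H$ is pure of weight $n$, concentrated in degree $0$. The claim to prove is $\gr^W_\ell \mathcal{H}^j\mathcal{K}_X^\bullet = 0$ whenever $\ell - j \geq n$.

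\emph{Degrees $j \neq 0, 1$.} Since $\ic_X^H$ lives in degree $0$, the triangle gives $\mathcal{H}^j\mathcal{K}_X^\bullet \cong \mathcal{H}^j\mathbb{Q}_X^H[n]$ for $j \leq -1$. These have weights $\leq n+j$, so $\gr^W_\ell$ vanishes for $\ell - j > n$. For $j \geq 2$ the cohomology is zero.

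\emph{Degrees $0$ and $1$.} The long exact sequence reads
\[
0 \to \mathcal{H}^0\mathcal{K}_X^\bullet \to \mathcal{H}^0\mathbb{Q}_X^H[n] \xrightarrow{\gamma} \ic_X^H \to \mathcal{H}^1\mathcal{K}_X^\bullet \to 0,
\]
where $\gamma$ is the canonical surjection onto $\gr^W_n$. First, $\gamma$ is surjective, because $\mathcal{H}^0\mathbb{Q}_X^H[n]$ has weights $\leq n$, so $W_n$ is the whole module and $W_n \to \gr^W_n$ is onto. Hence $\mathcal{H}^1\mathcal{K}_X^\bullet = 0$. Second, $\mathcal{H}^0\mathcal{K}_X^\bullet = \ker\gamma = W_{n-1}\mathcal{H}^0\mathbb{Q}_X^H[n]$, which has weights $\leq n-1$. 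This uses the strictness of morphisms with respect to $W$ in $\MHM$ and the exactness of $\gr^W$.

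Combining the cases, $\gr^W_\ell\mathcal{H}^j\mathcal{K}_X^\bullet=0$ for $\ell - j \geq n$, which is the definition of weights $\leq n-1$.

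The only substantive input is the bound "weights $\leq n$" for $\mathbb{Q}_X^H[n]$. It comes from $a^*$ not increasing weights (Saito), together with $\mathbb{Q}^H$ on a point being pure of weight $0$. Everything after that is bookkeeping with the convention for weights of complexes.
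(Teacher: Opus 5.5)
Your treatment of degrees $0$ and $1$ is correct. There is, however, a genuine gap in degrees $j\le -1$, and that is where all the content of the lemma lies.

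There you correctly get $\mathcal{H}^j\mathcal{K}_X^\bullet\cong\mathcal{H}^j\mathbb{Q}_X^H[n]$. But ``weights $\le n$'' only yields $\gr^W_\ell\mathcal{H}^j=0$ for $\ell-j>n$, as you write yourself. Weights $\le n-1$ needs this for $\ell-j\ge n$. Your ``combining the cases'' step therefore silently assumes $\gr^W_{n+j}\mathcal{H}^j\mathbb{Q}_X^H[n]=0$ for all $j<0$, i.e.\ that the negative-degree cohomology modules of $\mathbb{Q}_X^H[n]$ have weights $\le n+j-1$ rather than $\le n+j$. Given your degree-$0$ computation, the lemma is \emph{equivalent} to this sharper bound.

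It cannot follow from the formal inputs you list. Take $M=\ic_X^H\oplus P[1]$, with $P$ a Hodge structure of weight $n-1$ supported at a point. It has cohomology in degrees $\le 0$, weights $\le n$, and $\gr^W_n\mathcal{H}^0M=\ic_X^H$. Yet the cocone of $M\to\ic_X^H$ is $P[1]$, which does not have weights $\le n-1$. The gain of one weight is also real and sharp: for the affine cone over a smooth projective $Y$, $\mathcal{H}^j\mathbb{Q}_X^H[n]$ for $2-n\le j\le -1$ is the primitive cohomology $H^{n+j-1}_{\mathrm{prim}}(Y)$ placed at the vertex, pure of weight exactly $n+j-1$.

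This is also exactly the part the paper uses. In Theorem~\ref{thm main} with $k\le n$ one has $c=n-k+1\ge 1$. So Proposition~\ref{prop main} only sees $\mathcal{H}^{j}\mathbb{D}\mathcal{K}_X^\bullet=\mathbb{D}\mathcal{H}^{-j}\mathcal{K}_X^\bullet$ with $j\ge 1$, which are precisely the negative-degree pieces where your argument loses a weight.

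What is missing is the purity of $\ic_X^H$, used at the level of stalks. For an isolated singularity this is the semipurity of link cohomology: $H^m(\text{link})$ has weights $\le m$ for $m<n$. One way to supply it in general is the following pointwise criterion: a complex $M^\bullet$ has weights $\le w$ iff $H^k i_x^*M^\bullet$ has weights $\le w+k$ for every point $i_x\colon\{x\}\to X$.
\begin{itemize}
\item One direction holds because $i_x^*$ preserves weights $\le w$.
\item The converse follows by induction over a stratification. Use the triangle $u_!u^*\to\mathrm{id}\to i_*i^*$, the fact that $u_!$ and $i_*$ preserve weights $\le w$, and the fact that on an open stratum where all $\mathcal{H}^qM^\bullet$ are variations of mixed Hodge structure, weights are computed fiberwise.
\end{itemize}
Now apply $i_x^*$ to triangle \ref{eqn rhm triangle}. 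Here $i_x^*\mathbb{Q}_X^H[n]$ is $\mathbb{Q}(0)$ in degree $-n$, and $i_x^*\ic_X^H$ has weights $\le n$ because $\ic_X^H$ is pure.
\begin{itemize}
\item For $k\ne -n$, $H^k i_x^*\mathcal{K}_X^\bullet$ is a quotient of $H^{k-1}i_x^*\ic_X^H$, so it has weights $\le n-1+k$.
\item For $k=-n$, it is the kernel of $\mathbb{Q}\to H^{-n}i_x^*\ic_X^H$. This map is the stalk of the adjunction map $\mathbb{Q}_X\to u_*\mathbb{Q}_U$, where $u\colon U\to X$ is the inclusion of the smooth locus. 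It is injective since $U$ is dense, so the kernel vanishes.
\end{itemize}
This gives the lemma, which the paper itself simply quotes from \cite[Proposition $6.4$]{popa park lefschetz}.
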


\subsection{Du Bois Complexes}

Saito \cite[Theorem $0.2$]{saito hodge complexes} proved that the Du Bois complexes of $X$ can be identified with the graded de Rham pieces of the trivial Hodge module, up to shift:
\begin{align*}
    \underline{\Omega}_X^p \cong \gr_{-p}^F \dr \big( \mathbb{Q}_X^H[n] \big) [p-n].
\end{align*}
We make this identification throughout the paper. Analogously, the \emph{$p$-th intersection Du Bois complex} \cite{psv} of $X$ is the complex $I \underline{\Omega}_X^p = \gr_{-p}^F \dr( \ic_X^H )[p-n]$. These complexes are nonzero only for $0 \leq p \leq n$. Moreover, there are natural maps $\Omega_X^p \rightarrow \underline{\Omega}_X^p \rightarrow I \underline{\Omega}_X^p$, which are isomorphisms when $X$ is smooth.

For future reference, we record that since $\ic_X^H$ is a pure Hodge module of weight $n$,
\begin{lemma}\label{lem ic vanishing}
    $\mathcal{H}^j \gr_p^F \dr ( \ic_X^H ) = 0$, whenever $j > 0$, $p > 0$, or $p < -n$.
\end{lemma}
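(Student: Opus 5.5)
The plan is to reduce the statement to Saito's description of pure Hodge modules and their graded de Rham complexes. The question is local, so we fix a local embedding $X \subset Y$ into a smooth variety of dimension $d$. We write $\mathcal{M}$ for the filtered $\mathcal{D}_Y$-module underlying $\ic_X^H$. By \eqref{eqn grDR}, the complex $\gr_p^F \dr(\ic_X^H)$ is concentrated in degrees $[-d,0]$, and its $j$-th term is $\gr_{p+j}^F \mathcal{M} \otimes \wedge^{-j} T_Y$. In particular $\mathcal{H}^j = 0$ for $j > 0$ is automatic from the shape of the complex. The real content is therefore the vanishing of the whole complex for $p > 0$ and for $p < -n$.

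For $p < -n$ the key input is the lowest Hodge level. Since $\ic_X^H$ is pure of weight $n$ with strict support $X$, Saito's theory gives the first nonzero Hodge piece of $\mathcal{M}$ in terms of the codimension $c = d-n$: with the normalization used here, where $\mathbb{Q}_Y^H[d]$ has $\gr_p^F = 0$ for $p \neq 0$, one has $F_{q}\mathcal{M} = 0$ for $q < -n-(d-n) \cdot 0 \ldots$. I would pin this down by comparing with the smooth case. When $X$ is smooth, $\ic_X^H = \mathbb{Q}_X^H[n]$ and $\gr_{-p}^F \dr = \Omega_X^p[n-p]$, which is nonzero only for $0 \le p \le n$. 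The general case then follows from two facts: the bound on $F_\bullet$ is governed by the generic behavior on the smooth locus, and $\mathcal{M}$ has no sub-objects supported on smaller strata. Concretely, I would use Saito's result that $\gr_p^F \dr(M)$ is acyclic for $p < p_0$, where $p_0$ is the minimal $p$ with $F_p\mathcal{M} \neq 0$, computed through $F_{p_0}\mathcal{M} = F_{p_0} (j_{!*})$ on a dense open. Alternatively, I would invoke the result that $\gr^F_{-n}\dr(\ic^H_X) \simeq Rf_*\omega_{\tilde X}[n]$, the lowest piece, for a resolution $f$.

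For $p > 0$ I would use duality. Saito's duality gives $\mathbb{D}(\gr_p^F \dr(M)) \cong \gr_{-p}^F \dr(\mathbb{D}M)$, where the left-hand side is Grothendieck duality. By polarizability, $\mathbb{D}\,\ic_X^H \cong \ic_X^H(n)$, and the Tate twist shifts the filtration index by $n$. This yields
\[
\mathbb{D}\bigl(\gr_p^F \dr(\ic_X^H)\bigr) \cong \gr_{-p-n}^F \dr(\ic_X^H).
\]
Hence vanishing for $p > 0$ is equivalent to vanishing for $-p-n < -n$, which reduces the second range to the first.

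The main obstacle is the careful bookkeeping of filtration indices and Tate twist conventions. One has to check that, under the paper's normalization (Du Bois complexes nonzero only for $0\le p\le n$), the self-duality twist is exactly $n$ and the lowest nonzero graded de Rham piece sits at index $-n$. I would verify these conventions on the smooth case first and then transport them via the duality and strict-support arguments above.
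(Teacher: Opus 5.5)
Your argument is correct and is essentially the paper's own one-line justification written out: $j>0$ follows from the shape of \ref{eqn grDR}, the range $p<-n$ from the lowest Hodge piece, and $p>0$ from purity of weight $n$ via $\mathbb{D}\ic_X^H\cong\ic_X^H(n)$, which is exactly how your duality step uses it. The garbled normalization sentence needs fixing: \ref{eqn grDR} is the right-module formula, where $\mathbb{Q}_Y^H[d]$ corresponds to $\omega_Y$ with $\gr^F_{-d}\omega_Y\neq 0$ (not $\gr^F_0$), and the fact you actually need is $F_q\mathcal{M}=0$ for $q<-n$. That fact follows just as you sketch: it holds over the smooth locus of $X$, and a nonzero coherent $F_q\mathcal{M}$ supported on the singular locus would generate a $\mathcal{D}_Y$-submodule there, contradicting strict support.
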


Recently, there has been a surge of interest in singularity notions related to the Du Bois complex. Especially important are generalizations of rational and Du Bois singularities called $m$-rational and $m$-Du Bois singularities. We will not need much from this subject, except for a couple of definitions. Say $X$ has \emph{pre-$m$-rational singularities} \cite{shen venkatesh vo} if 
\begin{align*}
    \mathcal{H}^i ( \mathbb{D} ( \underline{\Omega}_X^{n-p})[-n] ) = 0 \hspace{.5cm} \text{ for all } i > 0 \text{ and } 0 \leq p \leq m.
\end{align*}
We denote $\mathbb{D}(-) = \mathcal{R}\mathcal{H}om_{\mathscr{O}_X}(-, \omega_X^\bullet)$, where $\omega_X^\bullet$ is the dualizing complex. When $X$ is smooth, this is isomorphic to $\omega_X[n]$. Say that $X$ satisfies \emph{condition $D_m$} if $\underline{\Omega}_X^p \rightarrow I \underline{\Omega}_X^p$ is an isomorphism for all $p \leq m$. See \cite{popa park lefschetz} or \cite{dod}, where an equivalent condition is studied. If $X$ has pre-$m$-rational singularities, then it satisfies condition $D_m$.

\subsection{Depth and local cohomological defect}

We require a notion of depth for complexes of coherent sheaves $E \in D^b_{\coh}(X)$. Denote
\begin{align*}
    \depth E = -\max \{ j : \mathcal{H}^{j} \mathbb{D}E \neq 0 \},
\end{align*}
 If $E$ is a coherent sheaf, this agrees with the usual definition of $\depth E$. 

We will study $\depth \underline{\Omega}_X^p$ in detail. Note that $\mathbb{D} \underline{\Omega}_X^p = \gr_p^F \dr (\mathbb{D} \mathbb{Q}_X^H)[-p]$, because $\mathbb{D} \circ \gr_{-p}^F \dr = \gr_p^F \dr \circ \mathbb{D}$ by \cite[Section $2.4$]{saito 88} or \cite[Lemma $3.2$]{park}. Hence
\begin{align}\label{eqn depth du bois}
    \depth \underline{\Omega}_X^p + p = -\max \{ j :  \mathcal{H}^j \gr_p^F \dr (\mathbb{D} \mathbb{Q}_X^H) \neq 0 \}.
\end{align}
We have the basic inequalities $0 \leq \depth \underline{\Omega}_X^p \leq n$. In fact, $\depth \underline{\Omega}_X^p \geq 1$, as $\mathcal{H}^0\underline{\Omega}_X^p$ is torsion-free by \cite[Remark $3.8$]{h-topology}. Moreover, $\depth I\underline{\Omega}_X^p \geq n-p$, because $\ic_X^H$ is self-dual, up to twist, and the graded de Rham pieces of a Hodge module are concentrated in non-positive degrees. From this, we deduce

\begin{lemma}\label{lem condtion Dm}
If $X$ satisfies condition $D_m$, then $\depth \underline{\Omega}_X^p +p \geq n$ for all $p \leq m$. 
\end{lemma}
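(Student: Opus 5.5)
Condition $D_m$ says that for $p \leq m$ the natural map $\underline{\Omega}_X^p \to I\underline{\Omega}_X^p$ is an isomorphism in $D^b_{\coh}(X)$. Depth, as defined above, depends only on the isomorphism class of a complex, since it is computed from the cohomology sheaves of $\mathbb{D}E$. Hence $\depth \underline{\Omega}_X^p = \depth I\underline{\Omega}_X^p$ for all $p \leq m$. The lemma then follows from the inequality $\depth I\underline{\Omega}_X^p \geq n-p$: adding $p$ to both sides gives $\depth \underline{\Omega}_X^p + p \geq n$. The case $p<0$ is vacuous, since there $\underline{\Omega}_X^p = 0$; I interpret the statement for $0 \le p \le m$, or adopt the convention that $\depth 0 = +\infty$.

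So the only substantive step is to justify $\depth I\underline{\Omega}_X^p \geq n-p$, which the text states just before the lemma. My plan is as follows.

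\emph{Dualize.} By definition, $I\underline{\Omega}_X^p = \gr_{-p}^F\dr(\ic_X^H)[p-n]$. Duality commutes with the graded de Rham functor, $\mathbb{D}\circ\gr_{-p}^F\dr = \gr_p^F\dr\circ\mathbb{D}$. Combining this with $\mathbb{D}\ic_X^H \cong \ic_X^H(n)$, the Tate twist coming from polarizability, we get
\begin{align*}
\mathbb{D} I\underline{\Omega}_X^p \cong \gr_{p}^F\dr(\ic_X^H(n))[n-p] = \gr_{p-n}^F\dr(\ic_X^H)[n-p].
\end{align*}

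\emph{Use vanishing.} By Lemma \ref{lem ic vanishing}, $\gr_{p-n}^F\dr(\ic_X^H)$ has cohomology only in degrees $\leq 0$. After the shift by $[n-p]$, the complex $\mathbb{D}I\underline{\Omega}_X^p$ has cohomology only in degrees $\leq -(n-p)$. By the definition of depth for complexes, this means $\depth I\underline{\Omega}_X^p \geq n-p$.

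The only point needing care is the bookkeeping of the twist convention: $F_\bullet$ shifts under $(n)$ by $n$, and one must check the sign. However, the conclusion does not depend on it, because Lemma \ref{lem ic vanishing} gives non-positivity of degrees for every index. So I expect no real obstacle.
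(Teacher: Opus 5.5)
Your proof is correct and follows the paper's own argument: condition $D_m$ identifies $\underline{\Omega}_X^p$ with $I\underline{\Omega}_X^p$ for $p \le m$. The bound $\depth I\underline{\Omega}_X^p \geq n-p$ then comes from self-duality of $\ic_X^H$ up to twist, together with the fact that graded de Rham pieces of a Hodge module sit in non-positive degrees. You have just written out the duality bookkeeping $\mathbb{D} I\underline{\Omega}_X^p \cong \gr_{p-n}^F\dr(\ic_X^H)[n-p]$, which the paper leaves implicit.
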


We also require the injectivity theorem of \cite{kovacs schwede}. It says the maps
\begin{align*}
    \mathscr{E}xt^j_{\mathscr{O}_X}(\underline{\Omega}_X^0, \omega_X^\bullet) \rightarrow \mathscr{E}xt^j_{\mathscr{O}_X}(\mathscr{O}_X, \omega_X^\bullet).
\end{align*}
induced from $\mathscr{O}_X \rightarrow \underline{\Omega}_X^0$ are injective for each $j$. In particular, 
\begin{align}\label{eqn depth inequ}
   \depth \underline{\Omega}_X^0 \geq \depth \mathscr{O}_X. 
\end{align}

Recall that the local cohomological defect $\lcdef(X)$ is defined using local cohomology sheaves. If $X$ is embedded in a smooth variety $Y$, then
\begin{align*}
    \lcdef(X) = \max \{ q : \mathcal{H}^q_X( \mathscr{O}_Y) \neq 0 \} - c,
\end{align*}
where $c = \dim Y - \dim X$. It admits a topological interpretation in terms of the \emph{rectified $\mathbb{Q}$-homological depth} $\RHD_\mathbb{Q}(X) = \min \{ j : {}^p \mathcal{H}^j(\mathbb{Q}_X) \neq 0 \}$. Indeed, \cite{saito lcdef} proves
\begin{align}\label{eqn lcdef top}
    \lcdef(X) = n - \RHD_\mathbb{Q}(X).
\end{align}
Since duality is exact on perverse sheaves, we have $\mathbb{D} ({}^p\mathcal{H}^{j}(\mathbb{Q}_X)) = {}^p\mathcal{H}^{-j}(\mathbb{D}\mathbb{Q}_X)$. If we combine this with the fact that the realization functor for mixed Hodge modules is faithful, we deduce
\begin{align}\label{eqn lcdef}
    \lcdef(X)= \max \{ j :  \mathcal{H}^j(\mathbb{D} (\mathbb{Q}_X^H[n])) \neq 0 \}.
\end{align}
We will use this formula for $\lcdef(X)$ throughout the paper.

\subsection{Perversity functions and perversity defect}
The category of perverse sheaves is the heart of the so-called middle perversity t-structure on the derived category $D^b_c(X)$ of constructible sheaves on $X$. In this section, we recall more generally how a perversity function determines a t-structure on $D^b_c(X)$. See \cite{bbd} or \cite{kashiwara} for details. We also introduce variants of the local cohomological defect with respect to such perversities.

Fix a function $p: \{ 0,1, \ldots , n \} \rightarrow \mathbb{Z}$, such that $p(k) - p(k+1)$ is $0,1$, or $2$ for all $k$. This is called a \emph{perversity function}.

\begin{definition}
Define a pair $({}^p D_c^{\leq 0}(X), {}^p D_c^{\geq 0}(X))$ of full subcategories of $D_c^b(X)$ as follows. For $K \in D_c^b(X)$, say $K \in {}^p D^{\leq 0}(X)$ if
\begin{align*}
    \dim \{ \Supp \mathcal{H}^j (K) \} < k, \hspace{.5cm} \text{if } j > p(k),
\end{align*}
and say $K \in {}^p D^{\geq 0}(X)$ if
\begin{align*}
    \dim \{ \Supp \mathcal{H}^{-j} ( \mathbb{D}  K) \} < k, \hspace{.5cm} \text{if } j < p(k) + 2k.
\end{align*}
\end{definition}
These subcategories form a t-structure on $D_c^b(X)$ by \cite[Theorem $10.2.8$]{kashiwara}. There is a convenient re-formulation of the definition in terms of a choice of Whitney stratification.

\begin{proposition}\cite[Proposition $10.2.4$]{kashiwara}\label{prop locally constant}
Let $K \in D_c^b(X)$ and consider a Whitney stratification of $X$ such that $i_{S}^* K$ and $i_{S}^! K$ have locally constant cohomology sheaves for each inclusion of strata $i_S: S\rightarrow X$. Then:
\begin{enumerate}
    \item $K \in {}^p D^{\leq 0}_c(X)$ if and only if $\mathcal{H}^j(i_{S}^* K) = 0$, whenever $j > p(d_S)$,
    \item $K \in {}^p D_c^{\geq 0}(X)$ if and only if $\mathcal{H}^j(i_{S}^! K) = 0$, whenever $j < p(d_S)$.
\end{enumerate}
\end{proposition}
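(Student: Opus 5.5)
The plan is to read both definitions stratum by stratum. The key input is that, for the chosen Whitney stratification, every cohomology sheaf involved is locally constant along each stratum. We also use two consequences of the axiom that $p(k)-p(k+1)\in\{0,1,2\}$:
\begin{itemize}
\item $p$ is nonincreasing;
\item $k\mapsto p(k)+2k$ is nondecreasing.
\end{itemize}

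\textbf{Part (1).} Since $i_S^*$ is exact on sheaves, $\mathcal{H}^j(K)|_S=\mathcal{H}^j(i_S^*K)$. This sheaf is locally constant on $S$, so $\Supp \mathcal{H}^j(K)$ is a union of strata: it is the union of the $S$ with $\mathcal{H}^j(i_S^*K)\neq 0$. Here we may refine to connected strata, or simply note that a nonzero locally constant sheaf on a connected component of $S$ has full support there.

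Suppose $K\in{}^pD^{\leq 0}_c(X)$ and $\mathcal{H}^j(i_S^*K)\neq 0$ for some $j>p(d_S)$. Then $\Supp \mathcal{H}^j(K)$ contains a $d_S$-dimensional piece of $S$. Taking $k=d_S$ in the definition gives a contradiction.

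Conversely, assume the vanishing condition and let $j>p(k)$. Any stratum $S$ in $\Supp\mathcal{H}^j(K)$ satisfies $p(d_S)\geq j>p(k)$. Since $p$ is nonincreasing, this forces $d_S<k$. Hence $\dim\Supp\mathcal{H}^j(K)<k$.

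\textbf{Part (2).} We reduce to the same argument using duality.
\begin{itemize}
\item Verdier duality gives $i_S^*\mathbb{D}K\cong \mathbb{D}_S\, i_S^!K$. In particular, $i_S^*\mathbb{D}K$ also has locally constant cohomology, so the stratification is adapted to $\mathbb{D}K$ as well.
\item $S$ is a complex manifold of real dimension $2d_S$, hence oriented, so $\omega_S\cong\mathbb{Q}_S[2d_S]$. For $L$ with locally constant cohomology over the field $\mathbb{Q}$, $\mathbb{D}_S L\cong L^\vee[2d_S]$, with duals taken stalkwise in each degree.
\end{itemize}
Combining these, we obtain, for every integer $j$,
\begin{align*}
\mathcal{H}^{-j}(i_S^*\mathbb{D}K)\neq 0 \iff \mathcal{H}^{j-2d_S}(i_S^!K)\neq 0 .
\end{align*}
So, as in part (1), $\Supp\mathcal{H}^{-j}(\mathbb{D}K)$ is the union of the strata $S$ with $\mathcal{H}^{j-2d_S}(i_S^!K)\neq 0$.

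Suppose $K\in{}^pD^{\geq 0}_c(X)$ and $\mathcal{H}^{i}(i_S^!K)\neq 0$ with $i<p(d_S)$. Put $j=i+2d_S<p(d_S)+2d_S$. Then $\Supp\mathcal{H}^{-j}(\mathbb{D}K)$ contains $S$, which contradicts the definition with $k=d_S$.

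Conversely, assume the vanishing condition and let $j<p(k)+2k$. A stratum $S$ in $\Supp\mathcal{H}^{-j}(\mathbb{D}K)$ must satisfy $j-2d_S\geq p(d_S)$, that is, $p(d_S)+2d_S\leq j<p(k)+2k$. Since $p(k)+2k$ is nondecreasing, this gives $d_S<k$.

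\textbf{Main obstacle.} The only genuinely nonformal step is the identification $\mathbb{D}_S L\cong L^\vee[2d_S]$ on a stratum, together with the base-change $i_S^*\mathbb{D}\cong\mathbb{D}_S i_S^!$. Both are standard properties of Verdier duality on manifolds.

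The remaining point needing care is that "support is a union of strata" uses local constancy on connected components. Because of this, the dimension condition can be checked at the level of individual strata.
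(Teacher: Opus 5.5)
The paper gives no argument of its own here. It quotes the statement from Kashiwara--Schapira \cite[Proposition~10.2.4]{kashiwara}, so the only comparison is with a citation.

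Your proposal is a correct, self-contained proof from the definitions exactly as the paper states them. Part (1) is the direct stratum-by-stratum reading of the support condition, using that $p$ is nonincreasing. In part (2) you convert the paper's duality-based definition of ${}^pD^{\geq 0}_c(X)$ into a costalk condition, via $i_S^*\mathbb{D}K\cong\mathbb{D}_S\, i_S^!K$ and $\mathcal{H}^{-j}(\mathbb{D}_S L)\cong(\mathcal{H}^{j-2d_S}L)^\vee$. After that, monotonicity of $k\mapsto p(k)+2k$ plays the role that monotonicity of $p$ played in part (1).

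The computation on a stratum is justified as you indicate. For a finite-rank local system $\mathcal{L}$ on $S$ one has $\mathcal{E}xt^a(\mathcal{L},\mathbb{Q}_S)=0$ for $a>0$, so the local-to-global $\mathcal{E}xt$ spectral sequence degenerates. What your route adds over the bare reference is a direct check against the paper's normalization $j<p(k)+2k$. That condition is just ${}^{p^*}D^{\leq 0}_c(X)$ applied to $\mathbb{D}K$, with $p^*(k)=-2k-p(k)$, so you never have to match conventions with the source.

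One small imprecision: $\Supp\mathcal{H}^j(K)$ is the \emph{closure} of the union of (components of) strata on which $\mathcal{H}^j(i_S^*K)\neq 0$, not that union itself. So the sentence ``any stratum $S$ in $\Supp\mathcal{H}^j(K)$ satisfies $p(d_S)\geq j$'' should be stated only for strata where the stalks are nonzero, and likewise in part (2). By the frontier condition, taking the closure adds only strata of strictly smaller dimension, so your dimension bounds are unaffected.
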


The more familiar t-structures on $D^b_c(X)$ are special cases of this construction.

\begin{example}\label{example cdef}
The function $p_0(x) = 0$ is called the \emph{zero perversity}. It determines the usual constructible t-structure on $D^b_c(X)$.
\end{example}

\begin{example}\label{example lcdef}
The function $p_n(x) = -x$ is called the \emph{middle perversity}. It determines the middle perversity t-structure, whose heart consists of perverse sheaves. Unless otherwise specified, the letter $p$ will henceforth refer specifically to the middle perversity.
\end{example}

We require a family of perversities interpolating between the zero perversity and the middle perversity.

\begin{example}
Consider, for each integer $0 \leq d \leq n$, the perversity function
\begin{align*}
    p_d(x) = - \min \{ x, d\}
\end{align*}
By an abuse of notation, we denote the associated t-structure by $({}^{d} D_c^{\leq 0}(X), {}^{d} D_c^{\geq 0}(X))$ and its cohomology sheaves by ${}^d \mathcal{H}^j(-)$. Observe that the case $d=0$ is the zero perversity and $d = n$ is the middle perversity.
\end{example}

Recall from \ref{eqn lcdef} that $\lcdef(X)$ is the largest nonzero cohomology of $\mathbb{D}(\mathbb{Q}_X[n])$ with respect to the middle perversity. We define a generalization of $\lcdef(X)$, in terms of the perversities $p_d$ for $0 \leq d \leq n$.

\begin{definition}\label{defn pdef}
The \emph{$d$-th perversity defect} of $X$ is
\begin{align*}
    \pdef(X, d) = \max \{ j : {}^d\mathcal{H}^j( \mathbb{D}(\mathbb{Q}_X[n])) \neq 0 \}.
\end{align*}
In other words, $\pdef(X,d)$ is the minimum index $j$ such that $\mathbb{D}(\mathbb{Q}_X[n]) \in {}^d D_c^{\leq j}(X)$. 
\end{definition}

By Example \ref{example lcdef}, $\pdef(X,n) = \lcdef(X)$ . We call $\pdef(X, 0)$ the \emph{constructible defect} $\cdef(X)$ of $X$. It is equal to the largest nonzero constructible cohomology of $\mathbb{D}(\mathbb{Q}_X[n])$ by Example \ref{example cdef}.

There is a chain of inequalities of perversity functions $p_n \leq \ldots \leq p_1 \leq p_0$. It yields inclusions ${}^n D^{\leq j}_c(X) \subset \ldots \subset {}^1 D^{\leq j}_c(X) \subset {}^0 D^{\leq j}_c(X)$, and hence
\begin{align}\label{eqn inequality chain}
    \cdef(X) = \pdef(X, 0) \leq \pdef(X, 1) \leq \ldots \leq \pdef(X,n) = \lcdef(X).
\end{align}

We can also also interpret the perversity defects $\pdef(X,d)$ in terms of the cohomology of small punctured neighborhoods. Indeed, fix a Whitney stratification of $X$. By the local structure of a Whitney stratification, $\mathbb{D}(\mathbb{Q}_X[n])$ has locally constant cohomology sheaves along each strata. Proposition \ref{prop locally constant} implies $\pdef(X,d) \leq n-k$ if and only $\mathcal{H}^j(i_S^* \mathbb{D}( \mathbb{Q}_X[n]) ) = 0$ for each strata $S$ and $j > n-k  - \min \{ d_S, d\}$. For a point $x \in S$, if $i_x: \{ x\} \rightarrow X$, we have

\begin{align*}
    \mathcal{H}^j (i_x^* \mathbb{D}(\mathbb{Q}_X[n])) = H^{-j}(i_x^! \mathbb{Q}_X[n]) = H^{n-j}_{\{x\}}(X, \mathbb{Q}).
\end{align*}

Let us denote by $U$ a Stein analytic neighborhood of $x$. There is a long exact sequence
\begin{align*}
    \ldots \rightarrow H^{n-j-1}(U \setminus \{ x \}, \mathbb{Q}) \rightarrow H^{n-j}_{\{x\}}(X, \mathbb{Q}) \rightarrow H^{n-j}(U, \mathbb{Q}) \rightarrow \ldots 
\end{align*}
As $U$ is contractible, this gives isomorphisms $H^{n-j}_{\{x\}}(X, \mathbb{Q}) \simeq \tilde{H}^{n-j-1}(U \setminus \{ x\}, \mathbb{Q})$. We conclude:

\begin{lemma}\label{lemma link}
If $k$ and $d$ are integers, we have $\pdef(X, d) \leq n - k$ if and only if
\begin{align*}
    \Tilde{H}^i(U \setminus \{ x\}, \mathbb{Q}) = 0 \hspace{.5cm} \text{ for all } i \leq k-2 + \min \{ d_S, d\} \text{ and all } x \in X.
\end{align*}   
\end{lemma}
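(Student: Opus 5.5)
The plan is to make the discussion preceding the statement precise: translate the bound on $\pdef(X,d)$ into a stalkwise vanishing condition along strata, and then read off each stalk as the cohomology of a punctured neighborhood.

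First, by \definitionref{defn pdef}, $\pdef(X,d)\le n-k$ means $\mathbb{D}(\mathbb{Q}_X[n])\in {}^dD^{\le n-k}_c(X)$. Equivalently, $K:=\mathbb{D}(\mathbb{Q}_X[n])[n-k]\in {}^dD^{\le 0}_c(X)$.

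Next, fix a Whitney stratification of $X$ adapted to $\mathbb{Q}_X$. Then $\mathbb{Q}_X$ is constructible along it, and so is its dual $K$. In particular $i_S^*K$ and $i_S^!K$ have locally constant cohomology on each stratum $S$, so \propositionref{prop locally constant}(1) applies with the perversity $p_d(x)=-\min\{x,d\}$. It says $K\in{}^dD^{\le 0}_c(X)$ if and only if $\mathcal{H}^j(i_S^*K)=0$ for $j>-\min\{d_S,d\}$. Undoing the shift, this reads
\begin{align*}
\mathcal{H}^j\bigl(i_S^*\mathbb{D}(\mathbb{Q}_X[n])\bigr)=0 \quad \text{for all } j>n-k-\min\{d_S,d\}.
\end{align*}
A sheaf on $S$ vanishes iff all its stalks do. Since $i_x^*=i_x^*i_S^*$ for $x\in S$, this is equivalent to the vanishing of $\mathcal{H}^j(i_x^*\mathbb{D}(\mathbb{Q}_X[n]))$ for every point $x\in S$ in the same range of $j$.

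For the stalk computation, use $i_x^*\mathbb{D}=\mathbb{D}\,i_x^!$, together with the fact that duality on a point is linear duality over $\mathbb{Q}$. This gives
\begin{align*}
\mathcal{H}^j\bigl(i_x^*\mathbb{D}(\mathbb{Q}_X[n])\bigr)\cong H^{-j}(i_x^!\mathbb{Q}_X[n])^\vee=H^{n-j}_{\{x\}}(X,\mathbb{Q})^\vee.
\end{align*}
By excision, $H^{\bullet}_{\{x\}}(X,\mathbb{Q})=H^{\bullet}_{\{x\}}(U,\mathbb{Q})$ for a small neighborhood $U$ of $x$. Choose $U$ to be the intersection of $X$ with a small ball around $x$ in a local embedding. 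By the local conic structure of analytic sets, $U$ is contractible and $U\setminus\{x\}$ retracts onto the link. The long exact sequence of the pair $(U,U\setminus\{x\})$ then gives
\begin{align*}
H^{n-j}_{\{x\}}(U,\mathbb{Q})\cong\tilde H^{n-j-1}(U\setminus\{x\},\mathbb{Q}).
\end{align*}
Here reduced cohomology, with the convention $\tilde H^{-1}(\emptyset)=\mathbb{Q}$, takes care of degree $0$ and of isolated points.

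Finally, substitute $i=n-j-1$. The range $j>n-k-\min\{d_S,d\}$ becomes $i<k-1+\min\{d_S,d\}$, that is, $i\le k-2+\min\{d_S,d\}$. This is exactly the stated condition, with $d_S$ the dimension of the stratum containing $x$.

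I expect the main point needing care to be the topological step: the existence of a contractible conic neighborhood $U$ and the excision identification $H_{\{x\}}(X)=H_{\{x\}}(U)$. I would cite the conic structure theorem for Whitney stratified sets for this, and check the reduced-cohomology conventions in low degrees.
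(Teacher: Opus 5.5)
Your proposal is correct and follows the same route as the paper. Both arguments apply Proposition~\ref{prop locally constant}(1) with the perversity $p_d$ along a Whitney stratification, identify the stalk $\mathcal{H}^j(i_x^*\mathbb{D}(\mathbb{Q}_X[n]))$ with $H^{n-j}_{\{x\}}(X,\mathbb{Q})$, and use the long exact sequence of the pair $(U, U\setminus\{x\})$ with $U$ contractible. Your extra care only tightens that argument: you include the linear dual at the stalk, which the paper suppresses and which is harmless for vanishing, and you take $U$ to be a conic ball neighborhood so that it is genuinely contractible.
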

For example, $\lcdef(X) \leq n-k$ if and only if $\Tilde{H}^i(U \setminus \{ x\}, \mathbb{Q}) = 0$ for all $i \leq k+d_S-2$ and all $x \in X$, as was shown in \cite[Corollary $3$]{saito lcdef} in terms of links. Moreover, $\cdef(X) \leq n-k$ if and only if $\Tilde{H}^i(U \setminus \{ x\}, \mathbb{Q}) = 0$ for all $i \leq k-2$ and all $x \in X$. This characterization gives another perspective on the inequalities \ref{eqn inequality chain}.

Let us offer one final way of thinking about the constructible defect $\cdef(X) = \pdef(X,0)$. One can view the trivial Hodge module $\mathbb{Q}_X^H$ as an iterated extension of Du Bois complexes
\begin{align*}
    \mathbb{Q}_X^H = \bigg( \underline{\Omega}_X^0 \rightarrow \underline{\Omega}_X^1 \rightarrow \ldots \rightarrow \underline{\Omega}_X^n \bigg)
\end{align*}
inside the derived category of differential complexes on $X$. Dualizing this expression, we obtain
\begin{align*}
    \mathbb{D}( \mathbb{Q}_X^H[n] ) = \bigg( \mathbb{D} \underline{\Omega}_X^n \rightarrow \ldots \rightarrow \mathbb{D} \underline{\Omega}_X^1 \rightarrow \mathbb{D} \underline{\Omega}_X^0 \bigg).
\end{align*}
It gives rise to a cohomology spectral sequence
\begin{align*}
    E_1^{p,q} = \mathcal{H}^q \mathbb{D} \underline{\Omega}_X^{n-p} \Rightarrow {}^c \mathcal{H}^{p+q} \mathbb{D}(\mathbb{Q}_X[n]).
\end{align*}
Using \ref{intro lcdef depth}, we have $\lcdef(X) \leq n-k$ if and only if $E_1^{p,q} = 0$ for all $p+q > n-k$. On the other hand, $\cdef(X) \leq n-k$ if and only if ${}^c \mathcal{H}^{p+q} \mathbb{D}(\mathbb{Q}_X^H[n]) = 0$ for all $p+q>n-k$. 

If the spectral sequence degerates at the $E_1$ page, then clearly $\cdef(X) = \lcdef(X)$. Very roughly, we can view the difference between $\lcdef(X)$ and $\cdef(X)$ as a measure of the failure of degeneration.

\begin{remark}
Though we will not use it in what follows, we remark that for each perversity function $p$, there is an associated t-structure $({}^p D^{\leq 0}(\MHM(X)), {}^p D^{\geq 0}(\MHM(X))$ on the derived category of mixed Hodge modules which lifts  $({}^p D_c^{\leq 0}(X), {}^p D_c^{\geq 0}(X))$. More precisely, a complex $\mathcal{M}^\bullet$ lies in ${}^p D^{\leq 0}(\MHM(X))$ (resp. ${}^p D^{\geq 0}(\MHM(X))$) if and only if $\rat_X(M^\bullet)$ lies in ${}^p D_c^{\leq 0}(X)$ (resp. ${}^p D_c^{\geq 0}(X)$). This follows from the argument of \cite[Theorem $1.2.8$]{kashiwara}, which only uses the six functor formalism. Saito remarked that the constructible t-structure lifts to mixed Hodge modules in \cite[Remark $4.6.2$]{saito 90}.
\end{remark}

\section{Depth and Local Cohomological Defect}\label{depth section}

\subsection{Fundamental Lemmas}
In this subsection, we prove some technical results which will be the key to our analysis.

Fix an integer $m$ and an object $M^\bullet \in D^b(\MHM(X))$. Locally, $X$ is embeddable, and \ref{eqn grDR} shows the condition $\gr_p^F \dr (M^\bullet) = 0$ for all $p \leq m$ is equivalent to $F_m M^\bullet = 0$. We need a refinement of this basic observation that accounts for cohomologies.

\begin{lemma}\label{lemma hodge-cohom 1}
Suppose $X$ is embeddable. Fix integers $m$ and $c$ and an object $M^\bullet \in D^b(\MHM(X))$. Then

\begin{align*}
    \mathcal{H}^{j} \gr_p^F \dr( M^\bullet) = 0 \hspace{.5cm} \mathrm{for} \hspace{.1cm} \mathrm{all} \hspace{.2cm} j \geq c \hspace{.2cm} \mathrm{and} \hspace{.2cm} p \leq m
\end{align*}
if and only if
\begin{align*}
    \mathcal{H}^{j} \gr_p^F (M^\bullet) = 0 \hspace{.5cm} \mathrm{for} \hspace{.1cm} \mathrm{all} \hspace{.2cm} j \geq c \hspace{.2cm} \mathrm{and} \hspace{.2cm} p \leq m.
\end{align*}
In particular, if $m \gg 0$, this latter condition is equivalent to $\mathcal{H}^j M^\bullet = 0$ for all $j \geq c$.
\end{lemma}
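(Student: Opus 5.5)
The plan is to compare the two complexes through the naive filtration on the complex \ref{eqn grDR}, inducting on $p$. Work locally on an embedding $X\subset Y$ with $\dim Y=d$. Represent $M^\bullet$ by a complex of mixed Hodge modules. Then $\gr_p^F M^\bullet$ is a complex of coherent $\mathscr{O}_Y$-modules (more precisely $\gr^F\mathcal{D}_Y$-modules). The object $\gr_p^F\dr(M^\bullet)$ is the total complex of the double complex whose columns are $\gr_{p-i}^F M^\bullet\otimes\wedge^i T_Y$, placed in horizontal degree $-i$ for $0\le i\le d$.

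Truncating away the column $i=0$ gives a distinguished triangle
\begin{align*}
    C_p^\bullet \rightarrow \gr_p^F M^\bullet \rightarrow \gr_p^F\dr(M^\bullet)\xrightarrow{+1}.
\end{align*}
Here $C_p^\bullet[1]$ is an iterated extension of the objects $\gr_{p-i}^F M^\bullet\otimes\wedge^iT_Y[i]$ for $1\le i\le d$. Since $\wedge^i T_Y$ is locally free, tensoring with it is exact. Hence
\begin{align*}
    \mathcal{H}^j\big(\gr_{p-i}^F M^\bullet\otimes\wedge^iT_Y[i]\big)=\mathcal{H}^{j+i}(\gr_{p-i}^F M^\bullet)\otimes\wedge^iT_Y.
\end{align*}
Locally only finitely many $\gr_q^F$ are nonzero below any bound, because $F_qM^\bullet=0$ for $q\ll0$. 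So for $p\ll 0$ both conditions hold trivially, and this is the base of the induction.

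For ``if'', suppose $\mathcal{H}^j\gr_q^FM^\bullet=0$ for all $j\ge c$ and $q\le m$. Fix $p\le m$. Every graded piece $\gr_{p-i}^FM^\bullet\otimes\wedge^iT_Y[i]$ with $0\le i\le d$ has $p-i\le m$. It therefore has vanishing $\mathcal{H}^j$ for $j\ge c-i$, and in particular for $j\ge c$. The class of objects with $\mathcal{H}^{\ge c}=0$ is closed under extensions, so $\mathcal{H}^j\gr_p^F\dr(M^\bullet)=0$ for $j\ge c$.

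For ``only if'', induct on $p\le m$. Assume $\mathcal{H}^j\gr_q^FM^\bullet=0$ for $j\ge c$ and all $q<p$. Then each piece of $C_p^\bullet[1]$ has vanishing $\mathcal{H}^j$ for $j\ge c-i$, and since $i\ge1$ this includes all $j\ge c-1$. Hence $\mathcal{H}^j C_p^\bullet = 0$ for $j\ge c$. The long exact sequence
\begin{align*}
    \mathcal{H}^{j}C_p^\bullet\rightarrow\mathcal{H}^j\gr_p^FM^\bullet\rightarrow\mathcal{H}^j\gr_p^F\dr(M^\bullet)
\end{align*}
has vanishing outer terms for $j\ge c$: the left by what was just shown, the right by hypothesis. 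So $\mathcal{H}^j\gr_p^FM^\bullet=0$ for $j\ge c$, which completes the induction.

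For the final assertion, the key input is strictness. Morphisms of mixed Hodge modules are strict with respect to $F$, so $\gr_p^F$ is exact on $\MHM$. This gives $\mathcal{H}^j\gr_p^F(M^\bullet)=\gr_p^F\mathcal{H}^j(M^\bullet)$. The Hodge filtration on each $\mathcal{H}^jM^\bullet$ is bounded below and exhaustive. Locally, and for the finitely many relevant $j$, it is also generated in finitely many steps, so $F_m\mathcal{H}^jM^\bullet=\mathcal{H}^jM^\bullet$ for $m\gg0$. Therefore the vanishing of all $\gr_p^F\mathcal{H}^jM^\bullet$ with $p\le m$ is equivalent to $\mathcal{H}^jM^\bullet=0$.

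The step needing the most care is exactly this identification of cohomology of $\gr^F$ with $\gr^F$ of cohomology. It is also where the meaning of $\gr_p^FM^\bullet$ as an object of a derived category must be pinned down, namely by using a strict filtered complex representative.
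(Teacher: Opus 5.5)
Your proof of the equivalence itself is correct and is essentially the paper's argument. The paper filters $\gr_m^F\dr(M^\bullet)$ by the columns of \ref{eqn grDR} and uses the resulting spectral sequence, inducting on $m$. This gives $\mathcal{H}^j\gr_m^F\dr(M^\bullet)\cong\mathcal{H}^j\gr_m^F(M^\bullet)$ for $j\ge c$, once the vanishing is known for all $p\le m-1$. Your triangle splitting off the column $i=0$, together with closure of the class $\{\mathcal{H}^{\ge c}=0\}$ under extensions, is the same column filtration written without a spectral sequence.

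The step that fails is in the final assertion: the claim that $F_m\mathcal{H}^jM^\bullet=\mathcal{H}^jM^\bullet$ for $m\gg0$ is false in general. Each $F_m$ is $\mathscr{O}_Y$-coherent, so this claim would make $\mathcal{H}^jM^\bullet$ an $\mathscr{O}_Y$-coherent $\mathcal{D}_Y$-module, hence locally free. No nonzero module supported on a proper closed subset $X\subsetneq Y$ has this property, and that is the typical situation here. For instance, for a point in $\mathbb{A}^1$ the module $\mathbb{C}[\partial_t]$ has $F_m$ strictly increasing for all $m$.

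Being ``generated in finitely many steps'' means something different. Goodness of the Hodge filtration gives $m_0$ with $F_k\mathcal{D}_Y\cdot F_m\mathcal{N}=F_{m+k}\mathcal{N}$ for all $m\ge m_0$ and $k\ge 0$. That is what you should use, as follows:
\begin{itemize}
\item Suppose $\gr_p^F\mathcal{N}=0$ for all $p\le m$ with $m\ge m_0$.
\item Then $F_m\mathcal{N}=0$, because the filtration is bounded below.
\item Hence $F_{m+k}\mathcal{N}=0$ for all $k\ge 0$, and so $\mathcal{N}=0$ by exhaustiveness.
\end{itemize}
Apply this to the finitely many $\mathcal{H}^jM^\bullet$, choosing $m_0$ uniformly, and combine it with $\mathcal{H}^j\gr_p^F(M^\bullet)=\gr_p^F\mathcal{H}^j(M^\bullet)$ from strictness. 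This repairs the last part of your argument.
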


\begin{proof}
Assume $\mathcal{H}^j \gr_p^F (M^\bullet) = 0$ for all $j \geq c$ and $p \leq m - 1$,. It suffices to prove
\begin{align*}
    \mathcal{H}^{j} \gr_m^F \dr( M^\bullet) \cong \mathcal{H}^{j} \gr_m^F ( M^\bullet) \hspace{.5cm} \text{ for all } j \geq c.
\end{align*}

Consider the expression
\begin{align*}
    \gr_{m}^F \dr(M^\bullet) = \big( \gr_{m-d}^F M^\bullet \otimes \wedge^d T_Y \rightarrow \ldots \rightarrow \gr_{m-1}^F M^\bullet \otimes T_Y \rightarrow \gr_{m}^F M^\bullet \big).
\end{align*}
There is a convergent spectral sequence with $E_1$ page
\begin{align*}
    E_1^{p,q} = \mathcal{H}^{q}(\gr_{m+p}^F M^\bullet \otimes \wedge^{-p} T_Y  ) \implies \mathcal{H}^{p+q} \gr_m^F \dr(M^\bullet) = 0.
\end{align*}
The terms $E_1^{p,q}$ vanish if $p> 0$ or if $p < 0$ and $q \geq c$ by assumption. One deduces that $E_1^{0,j} = E_\infty^{0,j}$ for $j \geq c$ and, in fact,
\begin{align*}
    \mathcal{H}^{j} \gr_m^F \dr( M^\bullet) \cong \mathcal{H}^{j} \gr_m^F ( M^\bullet)
\end{align*}
as desired.

The last statement follows, because $\gr_p^F$ commutes with taking cohomology.
\end{proof}

\begin{remark}
Though we will not use it in this paper, a similar spectral sequence argument proves: $\mathcal{H}^{j} \gr_p^F \dr( M^\bullet) = 0$ for all $j \leq c-p$ and $p \leq m$ if and only if $\mathcal{H}^{j} \gr_p^F( M^\bullet) = 0$ for all $j \leq c-p$ and $p \leq m$.
\end{remark}

We will often reference the following immediate corollary of Lemma \ref{lemma hodge-cohom 1}.

\begin{corollary}\label{cor hodge-cohom 1}
\begin{align*}
    \mathcal{H}^{j} \gr_p^F \dr( M^\bullet) = 0 \hspace{.5cm} \mathrm{for} \hspace{.1cm} \mathrm{all} \hspace{.2cm} j \geq c \hspace{.2cm} \mathrm{and} \hspace{.2cm} p \leq m
\end{align*}
if and only if
\begin{align*}
     \gr_p^F \dr (\mathcal{H}^j M^\bullet) = 0 \hspace{.5cm} \mathrm{for} \hspace{.1cm} \mathrm{all} \hspace{.2cm} j \geq c \hspace{.2cm} \mathrm{and} \hspace{.2cm} p \leq m.
\end{align*}
\end{corollary}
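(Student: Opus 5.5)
The plan is to apply Lemma~\ref{lemma hodge-cohom 1} twice: once to the complex $M^\bullet$, and once to each individual cohomology object $\mathcal{H}^j M^\bullet$, viewed as a complex concentrated in a single degree. Both conditions will be translated into the same intermediate statement about $\gr_p^F$ of the cohomology modules. The question is local, and $\gr_p^F \dr$ is defined locally, so we may assume $X$ is embeddable in a smooth $Y$, as the lemma requires.

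\textbf{Step 1.} By Lemma~\ref{lemma hodge-cohom 1}, the first condition is equivalent to
\begin{align*}
\mathcal{H}^j \gr_p^F (M^\bullet) = 0 \quad \text{for all } j \geq c \text{ and } p \leq m.
\end{align*}

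\textbf{Step 2.} The functor $\gr_p^F$ is exact on mixed Hodge modules, because morphisms are strict with respect to $F$. Hence $\gr_p^F$ commutes with taking cohomology, which is the fact used at the end of the lemma's proof. This gives
\begin{align*}
\mathcal{H}^j \gr_p^F(M^\bullet) \cong \gr_p^F(\mathcal{H}^j M^\bullet).
\end{align*}
So the condition from Step 1 says that $\gr_p^F \mathcal{H}^j M^\bullet = 0$ for all $j \geq c$ and $p \leq m$.

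\textbf{Step 3.} Fix $j \geq c$ and apply Lemma~\ref{lemma hodge-cohom 1} to the single module $N = \mathcal{H}^j M^\bullet$, placed in degree $0$, with the same $m$ and with cutoff $0$. Since $N$ is a module, $\mathcal{H}^i \gr_p^F(N)$ vanishes automatically for $i > 0$, and $\mathcal{H}^0\gr_p^F(N) = \gr_p^F N$. Likewise, $\gr_p^F \dr(N)$ is concentrated in nonpositive degrees by \ref{eqn grDR}.

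The lemma therefore gives the following equivalence: $\mathcal{H}^0 \gr_p^F \dr(N) = 0$ for all $p \leq m$ if and only if $\gr_p^F N = 0$ for all $p \leq m$.

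\textbf{Step 4.} The previous step only controls $\mathcal{H}^0$, but the second condition asks for the whole complex $\gr_p^F \dr(N)$ to vanish for all $p \leq m$. To close this gap, we use the observation at the start of the subsection: by \ref{eqn grDR}, $\gr_p^F \dr(N)$ is built only from the pieces $\gr_{p'}^F N$ with $p' \leq p$. So if $F_m N = 0$, then $\gr_p^F \dr(N) = 0$ entirely for all $p \leq m$.

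Conversely, if $\gr_p^F \dr(N) = 0$ for all $p \leq m$, then in particular its $\mathcal{H}^0$ vanishes, and Step 3 gives $F_m N = 0$.

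Combining Steps 1–4, the first condition is equivalent to $\gr_p^F \mathcal{H}^j M^\bullet = 0$ for all $j \geq c$ and $p \leq m$, which in turn is equivalent to $\gr_p^F \dr(\mathcal{H}^j M^\bullet) = 0$ for all $j \geq c$ and $p \leq m$. The only point needing care is the exactness of $\gr_p^F$ in Step 2, which rests on strictness of morphisms of mixed Hodge modules.
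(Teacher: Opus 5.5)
Your proof is correct and is essentially the paper's argument: apply Lemma~\ref{lemma hodge-cohom 1} to $M^\bullet$ and to each $\mathcal{H}^j M^\bullet$ separately, using that $\gr_p^F$ commutes with cohomology by strictness. Your Step 4 is fine as written; alternatively, applying the lemma to $N$ with a cutoff below $-\dim Y$ gives the vanishing of the whole complex $\gr_p^F \dr(N)$ directly.
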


\begin{proof}
Apply Lemma \ref{lemma hodge-cohom 1} to $M^\bullet$ and each $\mathcal{H}^j M^\bullet$ for $j \geq c$ separately, and use the fact that $\gr_p^F$ commutes with taking cohomology.
\end{proof}

We also require a lemma which brings weights into the picture.

\begin{lemma}\label{lemma hodge-weight}
Let $M$ be a mixed Hodge module on $X$ of weight $\geq \ell$. If
\begin{align*}
    \gr_{p}^F \dr(M) = 0 \hspace{.5cm} \text{ for all } p \leq m
\end{align*}
then the same holds for $p \geq -\ell - m$.
\end{lemma}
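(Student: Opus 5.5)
The plan is to reduce to pure Hodge modules using the weight filtration, and then handle the pure case with duality and polarization.

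\emph{Step 1 (reduction to pure pieces).} Morphisms of mixed Hodge modules are strict with respect to $F$, and $\gr_p^F \dr$ is exact on $\MHM(X)$. Apply this to the short exact sequences $0 \to W_{w-1}M \to W_w M \to \gr_w^W M \to 0$. If $\gr_p^F\dr(M)=0$ for all $p \le m$, then the same vanishing holds for every subquotient of $M$. Indeed, locally this condition says $F_m M = 0$, and the Hodge filtration on a sub (resp. quotient) is the induced (resp. image) filtration. In particular, $\gr_p^F\dr(\gr^W_w M)=0$ for $p\le m$ and every $w$.

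Since $M$ has weight $\ge \ell$, only $w \ge \ell$ occur. Conversely, exactness together with finiteness of the weight filtration shows the following: if each $\gr_w^W M$ satisfies $\gr_p^F\dr(\gr_w^W M)=0$ for $p \ge -\ell-m$, then so does $M$. So it suffices to treat a pure Hodge module $N$ of weight $w\ge \ell$ with $\gr_p^F\dr(N)=0$ for $p\le m$.

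\emph{Step 2 (pure case via duality).} Saito's theory provides a polarization for $N$, which gives an isomorphism $\mathbb{D}N \cong N(w)$. The Tate twist shifts the Hodge filtration, so $\gr_p^F\dr(N(w)) = \gr_{p-w}^F\dr(N)$. Combining this with $\mathbb{D}\circ \gr_{-p}^F\dr = \gr_p^F\dr\circ\mathbb{D}$ (as cited in the preliminaries), we get
\begin{align*}
\gr_{-p}^F\dr(N) \cong \mathbb{D}\,\gr_p^F\dr(\mathbb{D}N) \cong \mathbb{D}\,\gr_{p-w}^F\dr(N).
\end{align*}
The right-hand side vanishes when $p-w\le m$. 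Setting $q=-p$, this gives $\gr_q^F\dr(N)=0$ for $q\ge -w-m$. Since $w\ge \ell$, this range contains all $q\ge -\ell-m$, which is what Step 1 requires.

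\emph{Main obstacle.} The substantive points are already in the literature: exactness and strictness, and the polarization isomorphism. The step needing the most care is getting the sign and indexing conventions right for the Tate twist and for the commutation of $\mathbb{D}$ with $\gr^F\dr$. An off-by-$w$ error there would change the bound $-\ell-m$. I would first check the conventions against a smooth example, namely $\mathbb{Q}_X^H[n]$, which is pure of weight $n$ and satisfies $\gr_p^F\dr = \Omega_X^{-p}[n+p]$. This is nonzero exactly for $-n\le p\le 0$. Taking $m=-n-1$ and $\ell=n$ gives the bound $-\ell-m = 1$, which is consistent with that range, so the conventions above are correct.
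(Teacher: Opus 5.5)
Your proof is correct, and it takes a different route only in that it is self-contained where the paper cites. The paper proves the lemma in one line: it passes to $\mathbb{D}M$, which has weights $\le -\ell$. Via $\mathbb{D}\circ\gr^F_{p}\dr=\gr^F_{-p}\dr\circ\mathbb{D}$, the hypothesis becomes $\gr^F_{-p}\dr(\mathbb{D}M)=0$ for $p\le m$. The paper then invokes Park--Popa's Proposition 5.2 (the weights-$\le w$ statement) and dualizes the conclusion back.

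You instead prove the weights-$\ge\ell$ form directly:
\begin{itemize}
\item Strictness of $F$ lets the condition $F_m\mathcal{M}=0$ pass to each subquotient $\gr^W_w M$.
\item On each pure piece $N$ of weight $w$, the polarization $\mathbb{D}N\cong N(w)$, the Tate-twist shift, and the duality of graded de Rham pieces give vanishing of $\gr^F_q\dr(N)$ for $q\ge -w-m$. This is a sharper range than needed, since $w\ge\ell$.
\item Exactness of $\gr^F_q\dr$ along the finite weight filtration reassembles the vanishing for $M$.
\end{itemize}
Your indexing is right; for instance, the identity $\gr^F_{-p}\dr(N)\cong\mathbb{D}\,\gr^F_{p-w}\dr(N)$ checks out on $\mathbb{Q}_X^H[n]$ for smooth $X$.

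This is very likely the same mechanism behind the cited proposition, just run in dual form. It never dualizes the whole mixed module, only its pure graded pieces. What your route buys is transparency: it shows that the bound $-\ell-m$ is dictated by the lowest weight present, each higher weight giving a stronger vanishing. The paper's route buys brevity by outsourcing exactly these steps to the reference.
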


\begin{proof}
This is precisely the dual of \cite[Proposition $5.2$]{popa park lefschetz}. 
\end{proof}

We can now prove our main abstract result.

\begin{proposition}\label{prop main}
Fix integers $c, m$, and $w$. Let $M^\bullet \in D^b(\MHM(X))$ and suppose $M^\bullet$ has weights $\geq w$. If
\begin{align*}
    \mathcal{H}^j \gr_i^F \dr(M^\bullet) = 0 \hspace{.2cm} \text{ for all } j \geq c \text{ and } i \leq m,
\end{align*}
then
\begin{align*}
    \mathcal{H}^j \gr_i^F \dr(M^\bullet) = 0 \hspace{.2cm} \text{ for all } j \geq c \text{ and } i \geq -c-m-w.
\end{align*}
\end{proposition}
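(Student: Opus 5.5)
The plan is to reduce to single mixed Hodge modules by Corollary \ref{cor hodge-cohom 1}, and then apply Lemma \ref{lemma hodge-weight} to each cohomology module. The question is local, so I will assume $X$ is embeddable, which is the setting of Lemma \ref{lemma hodge-cohom 1}.

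\textbf{Translating the hypothesis.} By Corollary \ref{cor hodge-cohom 1}, the hypothesis is equivalent to
\begin{align*}
\gr_i^F \dr(\mathcal{H}^j M^\bullet) = 0 \quad \text{for all } j \geq c \text{ and } i \leq m.
\end{align*}
Likewise, the conclusion will follow once we show $\gr_i^F \dr(\mathcal{H}^j M^\bullet) = 0$ for all $j \geq c$ and $i \geq -c-m-w$. Unlike the ``$i \leq m$'' condition, this is not directly of the form in Corollary \ref{cor hodge-cohom 1}, so I will argue it separately in the last step.

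\textbf{Weight step.} Fix $j \geq c$ and set $N = \mathcal{H}^j M^\bullet$. Since $M^\bullet$ has weights $\geq w$, we have $\gr_\ell^W N = 0$ for $\ell - j < w$, so $N$ is a mixed Hodge module of weight $\geq w+j$. Applying Lemma \ref{lemma hodge-weight} to $N$ with $\ell = w+j$ gives $\gr_i^F \dr(N) = 0$ for all $i \geq -w-j-m$. Because $j \geq c$, we have $-w-j-m \leq -c-m-w$, so in particular $\gr_i^F \dr(\mathcal{H}^j M^\bullet) = 0$ for all $i \geq -c-m-w$ and all $j \geq c$.

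\textbf{Returning to $M^\bullet$.} This is the step needing care. Fix $i$ with $i \geq -c-m-w$. Apply the exact functor $\gr_i^F \dr$ to the truncation triangle $\tau_{<c} M^\bullet \to M^\bullet \to \tau_{\geq c} M^\bullet \xrightarrow{+1}$.
\begin{itemize}
\item The complex $\tau_{\geq c}M^\bullet$ is an iterated extension of the objects $\mathcal{H}^j M^\bullet[-j]$ with $j \geq c$. Each of these is killed by $\gr_i^F \dr$ by the weight step, so $\gr_i^F \dr(\tau_{\geq c} M^\bullet) = 0$.
\item Each $\gr_i^F \dr(\mathcal{H}^j M)$ is concentrated in nonpositive degrees, because of the shape of the complex \ref{eqn grDR}. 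Hence $\gr_i^F \dr(\tau_{<c} M^\bullet)$ has cohomology only in degrees $\leq c-1$.
\end{itemize}
From the triangle, $\gr_i^F \dr(M^\bullet) \cong \gr_i^F \dr(\tau_{<c}M^\bullet)$, so $\mathcal{H}^j \gr_i^F \dr(M^\bullet) = 0$ for all $j \geq c$, as required.

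The hypothesis on $i \leq m$ enters only through Corollary \ref{cor hodge-cohom 1}, so the first step is formal. I expect the main obstacle to be bookkeeping: matching the weight convention for complexes (weights $\geq w$ means $\gr_\ell^W\mathcal{H}^j = 0$ for $\ell - j < w$) with the single-module weight bound, so that the index $-c-m-w$ comes out exactly.
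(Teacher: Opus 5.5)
Your proof is correct and follows the paper's argument. You translate the hypothesis via Corollary \ref{cor hodge-cohom 1}, apply Lemma \ref{lemma hodge-weight} to each $\mathcal{H}^j M^\bullet$ (which has weights $\geq w+j$), and then reassemble. The only difference is cosmetic: the paper reassembles with the spectral sequence $E_2^{p,q} = \mathcal{H}^p \gr_i^F \dr(\mathcal{H}^q M^\bullet) \Rightarrow \mathcal{H}^{p+q}\gr_i^F\dr(M^\bullet)$, while your truncation triangle $\tau_{<c}M^\bullet \to M^\bullet \to \tau_{\geq c}M^\bullet$ unpacks the same vanishing pattern by hand.
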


\begin{proof}
By Corollary \ref{cor hodge-cohom 1}, our assumption is equivalent to the vanishing
\begin{align*}
    \gr_i^F \dr( \mathcal{H}^j  M^\bullet) = 0 \hspace{.2cm} \text{ for all } j \geq c \text{ and } i \leq m.
\end{align*}
Since the mixed Hodge module $\mathcal{H}^j M^\bullet$ has weights $\geq (w+j)$, Lemma \ref{lemma hodge-weight} shows
\begin{align*}
    \gr_i^F \dr( \mathcal{H}^j M^\bullet) = 0 \hspace{.2cm} \text{ for all } j \geq c \text{ and } i \geq -j-m-w.
\end{align*}
Fix $i \geq -c-m-w$, and consider the convergent spectral sequence
\begin{align*}
    E_2^{p,q} = \mathcal{H}^p \gr_i^F \dr( \mathcal{H}^q  M^\bullet) \Rightarrow \mathcal{H}^{p+q} \gr_i^F \dr(  M^\bullet ).
\end{align*}
We have shown $E_2^{p,q} = 0$ for $q \geq c$ and it also vanishes for $p > 0$. This implies $\mathcal{H}^{p+q} \gr_i^F \dr(  M^\bullet ) = 0$ for $p+q \geq c$. The result follows.
\end{proof}

\subsection{Applications}
We now apply our machinery to $\mathbb{D} (\mathbb{Q}_X^H[n]) \in D^b(\MHM(X))$ and related complexes. As a warm-up, this yields a direct proof for the equivalence between the topological (\ref{eqn lcdef top}) and holomorphic (\ref{intro lcdef depth}) characterizations of $\lcdef(X)$.

\begin{theorem}\label{thm lcdef depth body}
$\RHD_\mathbb{Q}(X) = \min_{0 \leq p \leq n} \{ \depth \underline{\Omega}_X^p + p \}$
\end{theorem}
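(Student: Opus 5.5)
The plan is to prove the theorem by translating both sides into vanishing statements about the single object $M^\bullet := \mathbb{D}(\mathbb{Q}_X^H[n]) \in D^b(\MHM(X))$. Lemma \ref{lemma hodge-cohom 1}, taken with $m \gg 0$, then says these two vanishing statements are equivalent. I will avoid using (\ref{eqn lcdef top}), since the aim is a direct argument. Both sides are local on $X$: $\RHD_\mathbb{Q}$ is a minimum over perverse cohomology sheaves, and depth is defined through cohomology sheaves of $\mathbb{D}$. So I may assume $X$ is embedded in a smooth $Y$, which is the setting of Lemma \ref{lemma hodge-cohom 1}.

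\emph{Step 1: the topological side.} Write ${}^p\mathcal{H}^j(\mathbb{Q}_X) = {}^p\mathcal{H}^{j-n}(\mathbb{Q}_X[n])$. Duality is t-exact for the middle perversity, so ${}^p\mathcal{H}^j(\mathbb{Q}_X) \neq 0$ if and only if ${}^p\mathcal{H}^{n-j}(\mathbb{D}(\mathbb{Q}_X[n])) \neq 0$. The functor $\rat_X$ is faithful and exact with respect to the perverse t-structure, so this holds if and only if $\mathcal{H}^{n-j} M^\bullet \neq 0$. Hence
\begin{align*}
\RHD_\mathbb{Q}(X) = n - \max\{ j : \mathcal{H}^j M^\bullet \neq 0\},
\end{align*}
which is the formula (\ref{eqn lcdef}) obtained without passing through $\lcdef$.

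\emph{Step 2: the holomorphic side.} By (\ref{eqn depth du bois}), and since $M^\bullet = \mathbb{D}(\mathbb{Q}_X^H)[-n]$, we get
\begin{align*}
\depth \underline{\Omega}_X^p + p = n - \max\{ j : \mathcal{H}^j \gr_p^F \dr(M^\bullet) \neq 0\}.
\end{align*}
We have $\gr_p^F\dr(M^\bullet) \cong \mathbb{D}\underline{\Omega}_X^p[p-n]$, and the Du Bois complexes vanish outside $0 \le p \le n$. Therefore $\gr_p^F \dr(M^\bullet) = 0$ unless $0 \le p \le n$. It follows that $\min_{0\le p\le n}\{\depth\underline{\Omega}_X^p + p\}$ equals $n$ minus the largest $j$ for which $\mathcal{H}^j\gr_p^F\dr(M^\bullet) \neq 0$ for some $p \in \mathbb{Z}$.

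\emph{Step 3: comparison.} Fix an integer $c$. By Step 2, $\min_p\{\depth\underline{\Omega}_X^p+p\} \geq n-c+1$ if and only if $\mathcal{H}^j\gr_p^F\dr(M^\bullet) = 0$ for all $j \ge c$ and all $p$. Since only $0 \le p \le n$ contributes, it is enough to impose this for all $p \le m$ with $m \geq n$. The final statement of Lemma \ref{lemma hodge-cohom 1} converts this into $\mathcal{H}^j M^\bullet = 0$ for all $j \ge c$. Here $m \ge n$ must also be large enough that $F_m$ exhausts each $\mathcal{H}^jM^\bullet$ locally; this is possible since the relevant filtered modules are coherent, and enlarging $m$ costs nothing because the pieces with $p>n$ vanish anyway. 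By Step 1, $\mathcal{H}^j M^\bullet = 0$ for all $j \ge c$ is in turn equivalent to $\RHD_\mathbb{Q}(X) \geq n-c+1$. Since this holds for every $c$, the two sides agree.

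\emph{Main obstacle.} The only delicate point is the application of Lemma \ref{lemma hodge-cohom 1} for large $m$. Its hypothesis is vanishing for all $p \le m$, while the natural data only controls $0\le p\le n$. I will handle this exactly as in Step 3, by observing that the complexes $\gr_p^F\dr(M^\bullet)$ for $p>n$ are identically zero, so the conditions for $p\le n$ and for $p\le m$ coincide. The rest is bookkeeping of the shift $[-n]$ and the sign conventions in (\ref{eqn depth du bois}).
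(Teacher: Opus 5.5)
Your proposal is correct and is essentially the paper's proof. The paper rewrites both sides as the top nonvanishing degree of $\mathcal{H}^j\gr_p^F\dr(\mathbb{D}\mathbb{Q}_X^H)$ (over all $p$) and of $\mathcal{H}^j\mathbb{D}\mathbb{Q}_X^H$, and identifies them via Lemma~\ref{lemma hodge-cohom 1} with $m \gg 0$; you do the same with the object $\mathbb{D}(\mathbb{Q}_X^H[n])$, which only shifts every degree by $n$.

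One small correction to your gloss on $m \gg 0$. $F_m$ cannot literally exhaust $\mathcal{H}^j M^\bullet$ in general, because each $F_m$ is $\mathscr{O}_Y$-coherent. You do not need this, though. Your vanishing holds for every $p$, so Lemma~\ref{lemma hodge-cohom 1} gives $\gr_p^F\mathcal{H}^j M^\bullet = 0$ for all $p$, and exhaustiveness of $F_\bullet$ then finishes the argument. Alternatively, use that the Hodge filtration is good, so $F_m$ generates over $\mathcal{D}_Y$ for $m \gg 0$.
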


\begin{proof}
By \ref{eqn depth du bois}, we have
\begin{align*}
    \min_{0 \leq p \leq n} \{ \depth \underline{\Omega}_X^p + p \} = 
    - \max \{ j : \mathcal{H}^j \gr_p^F \dr( \mathbb{D}\mathbb{Q}_X^H ) \neq 0 \text{ for some } p \}.
\end{align*}
On the other hand, dualizing the definition of $\RHD_\mathbb{Q}(X)$ yields
\begin{align*}
    \RHD_\mathbb{Q}(X) = - \max\{ j : \mathcal{H}^j( \mathbb{D}\mathbb{Q}_X^H) \neq 0 \}.
\end{align*}
These two quantities are equal by Lemma \ref{lemma hodge-cohom 1}, taking $M^\bullet = \mathbb{D}\mathbb{Q}_X^H$ and $m \gg 0$.
\end{proof}

We prove our main theorem regarding the depth of Du Bois complexes.

\begin{theorem}[= Theorem \ref{intro thm main}]\label{thm main}
Fix integers $k$ and $m$. If
\begin{align*}
    \depth \underline{\Omega}_X^p + p \geq k \hspace{.2cm} \text{ for all } p \leq m,
\end{align*}
then the same inequality holds for $p \geq k-m-2$.
\end{theorem}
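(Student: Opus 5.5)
The plan is to apply Proposition \ref{prop main} to the dual of the RHM-defect object rather than to $\mathbb{D}\mathbb{Q}_X^H$ itself. Applying it directly to $M^\bullet=\mathbb{D}\mathbb{Q}_X^H$ loses one in the final bound, and the lower weight bound available for $\mathbb{D}\mathcal{K}_X^\bullet$ should recover it.

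\emph{Translation and the direct attempt.} By \ref{eqn depth du bois}, the hypothesis says
\begin{align*}
\mathcal{H}^j\gr_p^F\dr(\mathbb{D}\mathbb{Q}_X^H)=0 \quad\text{for } j\geq 1-k,\ p\leq m,
\end{align*}
and the conclusion is the same vanishing for $p\geq k-m-2$. Since $\mathbb{Q}_X^H[n]$ has weights $\leq n$, its dual $\mathbb{D}(\mathbb{Q}_X^H[n])$ has weights $\geq -n$. Because $M\mapsto M[1]$ raises the lower weight bound by one, $\mathbb{D}\mathbb{Q}_X^H=\mathbb{D}(\mathbb{Q}_X^H[n])[n]$ has weights $\geq 0$. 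Proposition \ref{prop main} with $c=1-k$ and $w=0$ would then only give $i\geq k-1-m$, which is one short.

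\emph{Passing to the defect object.} To gain the missing one, I will dualize and shift the triangle \ref{eqn rhm triangle}:
\begin{align*}
\mathbb{D}(\ic_X^H)[n]\rightarrow \mathbb{D}\mathbb{Q}_X^H\rightarrow \mathbb{D}(\mathcal{K}_X^\bullet)[n]\xrightarrow{+1}.
\end{align*}
By Lemma \ref{lemma wt n-1}, $\mathbb{D}\mathcal{K}_X^\bullet$ has weights $\geq -n+1$, so $\mathbb{D}(\mathcal{K}_X^\bullet)[n]$ has weights $\geq 1$. Moreover $\mathbb{D}\ic_X^H\cong\ic_X^H(n)$, so $\gr_i^F\dr(\mathbb{D}\ic_X^H[n])=\gr_{i+n}^F\dr(\ic_X^H)[n]$.

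By Lemma \ref{lem ic vanishing}, this term has $\mathcal{H}^j=0$ for $j>-n$, and it vanishes entirely when $i>-n$. The long exact sequences of $\mathcal{H}^j\gr_i^F\dr$ then let me transfer vanishing for $j\geq c=1-k$ in both directions.
\begin{enumerate}
\item From $\mathbb{D}\mathbb{Q}_X^H$ to $\mathbb{D}\mathcal{K}_X^\bullet[n]$, for $i\leq m$: this needs $\mathcal{H}^{j+1}$ of the $\ic$ term to vanish for $j\geq 1-k$. That holds when $2-k>-n$, i.e.\ $k\leq n+1$.
\item Back from $\mathbb{D}\mathcal{K}_X^\bullet[n]$ to $\mathbb{D}\mathbb{Q}_X^H$: this needs $\mathcal{H}^j$ of the $\ic$ term to vanish for $j\geq 1-k$. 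That holds when $k\leq n$, or whenever $i>-n$.
\end{enumerate}
Granting the first transfer, Proposition \ref{prop main} applied to $\mathbb{D}\mathcal{K}_X^\bullet[n]$ with $c=1-k$, $w=1$ gives vanishing for $j\geq 1-k$ and $i\geq -(1-k)-m-1=k-m-2$. The second transfer then carries this back to $\mathbb{D}\mathbb{Q}_X^H$, which is exactly the claim.

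\emph{Boundary cases.} The main obstacle is making sure the $\ic$ term never obstructs; I will dispose of this by cases.
\begin{enumerate}
\item If $m<0$ the hypothesis is vacuous. Then $k-m-2\geq k-1$, and I would argue the conclusion is vacuous or trivially satisfied: $\underline{\Omega}_X^p=0$ for $p>n$, and otherwise the relevant $p$ are large enough, using $\depth\geq 1$.
\item If $m\geq n$ the hypothesis already covers every $p$ with $\underline{\Omega}_X^p\neq 0$.
\item If $0\leq m<n$, then $p=0$ in the hypothesis forces $k\leq\depth\underline{\Omega}_X^0\leq n$, so both transfers above are valid.
\end{enumerate}
I will check these small-$m$ degenerate cases carefully, since they are where I am least sure the bookkeeping closes.
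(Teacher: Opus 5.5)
Your proposal is correct and is essentially the paper's proof: move the vanishing to the dual of the RHM-defect object, apply Proposition \ref{prop main} with the improved weight bound, and move back using Lemma \ref{lem ic vanishing}. You work with $\mathbb{D}\mathcal{K}_X^\bullet[n]$ (weights $\geq 1$) where the paper uses $\mathbb{D}\mathcal{K}_X^\bullet$ (weights $\geq -n+1$), and you dispose of $m<0$ directly via $\depth \underline{\Omega}_X^p \geq 1$, whereas the paper runs the triangle argument in all cases and uses $\depth \underline{\Omega}_X^n \geq 1$ only at the corner $j=0$, $p=n$.

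One slip to fix: the Tate twist gives $\gr_i^F\dr(\mathbb{D}\ic_X^H[n]) = \gr_{i-n}^F\dr(\ic_X^H)[n]$, not $\gr_{i+n}^F$, so this term vanishes identically only for $i<0$ or $i>n$, not for $i>-n$. This is harmless, because in your case $0\leq m<n$ you only use the vanishing in degrees $j>-n$, which holds since $k\leq n$.
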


The theorem would be a direct consequence of Proposition \ref{prop main} if $\mathbb{D} (\mathbb{Q}_X^H[n])$ had weights $\geq (-n+1)$. This is false, but $\mathbb{D} (\mathcal{K}_X^\bullet)$ has weights $\geq (-n+1)$ by Lemma \ref{lemma wt n-1}. This will be the heart of the matter in the proof below.

\begin{proof}
As $\depth \underline{\Omega}_X^0 \leq n$, we either have $k \leq n$, or else $m \leq -1$. 

Our hypothesis says precisely that
\begin{align*}
    \mathcal{H}^j \gr_p^F \dr( \mathbb{D} (\mathbb{Q}_X^H[n]) ) = 0 \hspace{.5cm} \text{ for all } j \geq -k+1+n \text{ and } p \leq m.
\end{align*}
Let us show the same vanishing holds with $\mathbb{D} \mathcal{K}_X^\bullet$ in place of $\mathbb{D} \mathbb{Q}_X^H[n]$.

Consider the dual of the distinguished triangle \ref{eqn rhm triangle} 
\begin{align*}
    \ic_X^H(n) = \mathbb{D}\ic_X^H \rightarrow \mathbb{D} (\mathbb{Q}_X^H[n]) \rightarrow \mathbb{D} (\mathcal{K}_X^\bullet) \xrightarrow[]{+1},
\end{align*}
together with its cohomology long exact sequence
\begin{align}\label{eqn:ic_les}
\ldots \rightarrow \mathcal{H}^j \gr_{p-n}^F \dr( \ic_X^H )
\rightarrow \mathcal{H}^j \gr_p^F \dr \bigl(\mathbb{D}(\mathbb{Q}_X^H[n])\bigr)
\rightarrow \mathcal{H}^j \gr_p^F \dr\bigl(\mathbb{D}\mathcal{K}_X^\bullet\bigr)
\\
\notag \rightarrow \mathcal{H}^{j+1} \gr_{p-n}^F \dr(\ic_X^H) \rightarrow \ldots
\end{align}
Either $j > 0$ or $p < 0$, so Lemma \ref{lem ic vanishing} implies the last term vanishes. Hence, 
\begin{align*}
    \mathcal{H}^j \gr_p^F \dr( \mathbb{D} \mathcal{K}_X^\bullet ) = 0 \hspace{.5cm} \text{ for all } j \geq -k+1+n \text{ and } p \leq m.
\end{align*}
As $\mathbb{D} \mathcal{K}_X^\bullet$ has weights $\geq (-n+1)$ by Lemma \ref{lemma wt n-1}, we may apply Proposition \ref{prop main} to deduce
\begin{align*}
    \mathcal{H}^j \gr_p^F \dr(\mathbb{D} \mathcal{K}_X^\bullet ) = 0 \hspace{.5cm} \text{ for all } j \geq -k+1+n \text{ and } p \geq k-m-2.
\end{align*}
It now suffices to show the same vanishing holds with $\mathbb{D} (\mathbb{Q}_X^H[n])$ in place of $\mathbb{D} \mathcal{K}_X^\bullet$. In the case $j > 0$, the desired vanishing follows from \ref{eqn:ic_les} and Lemma \ref{lem ic vanishing}. Otherwise, $k \geq n+1$ and hence $m \leq -1$. It follows that $p \geq k-m-2 \geq n$. If some inequality is strict, then $p > n$, and the vanishing follows from \ref{eqn:ic_les} and Lemma \ref{lem ic vanishing}. 

The final case to consider is $j = 0$ and $p = n$, for which we have $\mathcal{H}^0 \gr_{n}^F \dr ( \mathbb{D}(\mathbb{Q}_X^H[n])) = \mathcal{H}^0 \mathbb{D} \underline{\Omega}_X^n$. This vanishes because $\depth \underline{\Omega}_X^n \geq 1$.
\end{proof}

We obtain the following immediate consequence of Theorem \ref{intro thm main}.

\begin{corollary}[= Corollary \ref{intro cor answering conj}]
If $\depth \underline{\Omega}_X^0 \geq j+2$, then $\depth \underline{\Omega}_X^j \geq 2$.
\end{corollary}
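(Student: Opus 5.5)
This is a direct specialization of Theorem \ref{thm main} with $m = 0$ and $k = j+2$. The hypothesis $\depth \underline{\Omega}_X^0 \geq j+2$ says exactly that $\depth \underline{\Omega}_X^p + p \geq k$ for all $p \leq 0$. For $p = 0$ this is the assumption itself. For $p < 0$ the complex $\underline{\Omega}_X^p$ vanishes, so its depth is $+\infty$ under the convention $\depth E = -\max\{ j : \mathcal{H}^j \mathbb{D}E \neq 0\}$ with $\max \emptyset = -\infty$. Equivalently, in the formulation \ref{eqn depth du bois}, $\gr_p^F \dr(\mathbb{D}\mathbb{Q}_X^H) = 0$ for $p < 0$ by Lemma \ref{lemma hodge-cohom 1} and the identification with Du Bois complexes. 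The hypothesis of Theorem \ref{thm main} with $m=0$ therefore holds.

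Theorem \ref{thm main} then gives $\depth \underline{\Omega}_X^p + p \geq j+2$ for all $p \geq k - m - 2 = j$. Taking $p = j$ yields $\depth \underline{\Omega}_X^j \geq 2$, which is the claim.

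The only points to check are edge cases. If $j < 0$, the conclusion concerns $\underline{\Omega}_X^j = 0$, so it holds vacuously or by convention. If $j > n$, the hypothesis is impossible, since $\depth \underline{\Omega}_X^0 \leq n$. For $0 \leq j \leq n$ the argument above applies verbatim. Beyond confirming that the empty-degree convention for $p<0$ matches the usage inside the proof of Theorem \ref{thm main}, which works with the vanishing of $\mathcal{H}^j \gr_p^F \dr$, I expect no real obstacle.
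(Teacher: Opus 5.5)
Your proposal is correct and is exactly the paper's argument: the paper obtains this corollary by plugging $m=0$ (and $k=j+2$) into Theorem \ref{thm main}, reading off $\depth \underline{\Omega}_X^j + j \geq j+2$. One small correction: the vacuity for $p<0$ follows directly from $\underline{\Omega}_X^p=0$ (equivalently, $\gr_p^F \dr(\mathbb{D}(\mathbb{Q}_X^H[n]))=0$ for $p<0$), not from Lemma \ref{lemma hodge-cohom 1}.
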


This implies Conjecture \ref{intro conj}, because $\depth \underline{\Omega}_X^0 \geq \depth \mathscr{O}_X$ by \ref{eqn depth inequ}.

Combining Theorem \ref{intro thm main} with Theorem \ref{thm lcdef depth body}, we obtain a bound on which $\depth \underline{\Omega}_X^p$ are actually needed to compute $\lcdef(X)$. 

\begin{corollary}[= Theorem \ref{intro thm easy}]\label{cor easy}
Fix an integer $k$. Then we have $\lcdef (X) \leq n-k$ if and only if 
$\depth \underline{\Omega}_X^p + p \geq k$ for all $p \leq \lceil (k-3)/2 \rceil$.
\end{corollary}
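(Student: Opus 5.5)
The plan is to combine Theorem \ref{thm lcdef depth body}, which in the form \ref{intro lcdef depth} reads
\begin{align*}
\lcdef(X) = n - \min_{0 \leq p \leq n} \{ \depth \underline{\Omega}_X^p + p \},
\end{align*}
with Theorem \ref{thm main}, applied with $m = \lceil (k-3)/2 \rceil$.

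By the displayed formula, $\lcdef(X) \leq n-k$ is equivalent to $\depth \underline{\Omega}_X^p + p \geq k$ for all $0 \leq p \leq n$. The complexes $\underline{\Omega}_X^p$ vanish outside $0 \leq p \leq n$, and then have depth $+\infty$ under the convention $\depth E = -\max\{j : \mathcal{H}^j \mathbb{D}E \neq 0\}$ with $\max \emptyset = -\infty$. So the condition may equally be stated for all integers $p$. With this, the forward implication is immediate: the condition for all $p$ contains the condition for $p \leq \lceil (k-3)/2 \rceil$.

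For the converse, assume $\depth \underline{\Omega}_X^p + p \geq k$ for all $p \leq m$, where $m = \lceil (k-3)/2 \rceil$. Theorem \ref{thm main} gives the same inequality for all $p \geq k-m-2$. It therefore suffices to check that these two ranges cover every integer, that is, that there is no integer $p$ with $m < p < k-m-2$. Equivalently, we need $k-m-2 \leq m+1$, i.e. $2m \geq k-3$. This holds because $m \geq (k-3)/2$ by the definition of the ceiling. The two ranges together exhaust $\mathbb{Z}$, so the inequality holds for all $p$, and hence $\lcdef(X) \leq n-k$.

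The only point requiring care is bookkeeping at the edges, namely whether Theorem \ref{thm main} is meant for all integers $p$ or only for $0 \leq p \leq n$. When $k$ is small, $m$ may be negative and the hypothesis is vacuous. In that case the conclusion is still correct: for $k \leq 1$ it follows from $\depth \underline{\Omega}_X^p \geq 1$ together with Theorem \ref{thm main}. When $k > n$, both sides should be false, and this is consistent because $\depth \underline{\Omega}_X^0 \leq n$. I would simply state the depth convention for $p$ outside $[0,n]$ and note that the proof of Theorem \ref{thm main} handles those boundary cases. There is no substantive obstacle beyond this.
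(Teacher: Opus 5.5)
Your proposal is correct and is essentially the paper's proof: the forward direction comes from Theorem~\ref{thm lcdef depth body}, and for the converse Theorem~\ref{thm main} with $m=\lceil (k-3)/2\rceil$ gives the inequality for $p \geq k-m-2 = \lfloor (k-1)/2 \rfloor$, which together with $p \leq m$ covers every $p$. Your boundary remarks also hold under the tacit assumption $n \geq 1$, which the paper also makes through $\depth \underline{\Omega}_X^p \geq 1$; when $X$ is a point and $k=1$, the depth condition is vacuous while $\lcdef(X) \leq -1$ fails.
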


\begin{proof}
The only if implication is clear by Theorem \ref{thm lcdef depth body}. Conversely, suppose that $\depth \underline{\Omega}_X^p + p \geq k$ for $p \leq \lceil (k-3)/2 \rceil$. Theorem \ref{intro thm main} says the same inequality holds for $p \geq \lfloor (k-1)/2 \rfloor$. This means it holds for all $p$, so $\lcdef(X) \leq n-k$ by Theorem \ref{thm lcdef depth body}.
\end{proof}

This takes a particularly simple form for $k \leq 3$.

\begin{corollary}\label{cor dao takagi}
If $k \leq 3$, then $\depth \underline{\Omega}_X^0 \geq k$ if and only if $\lcdef(X) \leq n-k$.
\end{corollary}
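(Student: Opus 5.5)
This is a direct specialization of Corollary \ref{cor easy}. For $k \leq 3$ we have $\lceil (k-3)/2 \rceil \leq 0$, and I would split according to its exact value.

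\textbf{Case $k \in \{2,3\}$.} Here $\lceil (k-3)/2 \rceil = 0$. Corollary \ref{cor easy} then says that $\lcdef(X) \leq n-k$ holds if and only if $\depth \underline{\Omega}_X^0 \geq k$, which is exactly the claim.

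\textbf{Case $k \leq 1$.} Here $\lceil (k-3)/2 \rceil \leq -1$. The condition in Corollary \ref{cor easy} only involves indices $p \leq -1$, for which $\underline{\Omega}_X^p = 0$. I would handle the convention for the zero complex in either of two equivalent ways:
\begin{itemize}
\item regard the condition as vacuous, or
\item note that $\depth 0 = +\infty$, since $\mathbb{D}0$ has no nonzero cohomology.
\end{itemize}
Either way, Corollary \ref{cor easy} shows that $\lcdef(X) \leq n-k$ always holds. This agrees with the known bound $\lcdef(X) \leq n$, and with $\lcdef(X) \leq n-1$ from Theorem \ref{thm lcdef depth body}.

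It remains to check that $\depth \underline{\Omega}_X^0 \geq k$ also always holds in this case. This follows from the general fact, recorded in the preliminaries, that $\depth \underline{\Omega}_X^p \geq 1$ because $\mathcal{H}^0 \underline{\Omega}_X^p$ is torsion-free. So both sides of the equivalence are true.

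As a sanity check, the ``only if'' direction is valid for every $k$ directly from Theorem \ref{thm lcdef depth body} together with \ref{eqn lcdef top}. Indeed, $\lcdef(X) \leq n-k$ means $\min_p \{\depth \underline{\Omega}_X^p + p\} \geq k$, and taking $p=0$ gives $\depth \underline{\Omega}_X^0 \geq k$.

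The only step needing care is the degenerate range $k \leq 1$. There one must make sure the convention for $\depth$ of the zero complex, or the vacuous condition, is consistent with Corollary \ref{cor easy}. Given $\depth \underline{\Omega}_X^0 \geq 1$, this is immediate. Otherwise the statement is a direct specialization.
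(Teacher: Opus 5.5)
Your proposal is correct and follows the paper's route: the paper gives no separate proof and presents this corollary as the immediate specialization of Corollary \ref{cor easy}, using $\lceil (k-3)/2 \rceil = 0$ for $k \in \{2,3\}$. Your separate treatment of $k \leq 1$ via $\depth \underline{\Omega}_X^0 \geq 1$ spells out a degenerate case the paper leaves implicit; like the paper, it tacitly needs $n \geq 1$, since the bound $\depth \underline{\Omega}_X^0 \geq 1$ fails for a point. One small slip: your ``sanity check'' proves the \emph{if} direction of the statement as phrased here (it is the ``only if'' direction of Corollary \ref{cor easy}).
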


\begin{remark}\label{rmk injectivity thm}
We recover the following results of Ogus \cite{ogus} and Dao-Takagi \cite{dao takagi}: for $k \leq 3$, if $\depth \mathscr{O}_X \geq k$, then $\lcdef(X) \leq n - k$. Indeed, this follows directly from Corollary \ref{cor dao takagi}, together with the inequality $\depth \mathscr{O}_X \leq \depth \underline{\Omega}_X^0$ from \ref{eqn depth inequ}.
\end{remark}

The following example demonstrates that Corollary \ref{cor dao takagi} is strictly stronger than the result of Dao-Takagi for depth three.

\begin{example}\label{exmp beat dao takagi}
Let $Y$ be a smooth projective surface with ample line bundle $L$, such that $H^1(Y, \mathscr{O}_Y) = 0$ and $H^1(Y, L) \neq 0$ (for example, a double cover $Y$ of $\mathbb{P}^1 \times \mathbb{P}^1$ branched along a smooth $(2, 6)$-divisor, with $L$ the pullback of $\mathscr{O}(1,1)$).

Take $X = \Cone(Y, L)$. We have $\depth \underline{\Omega}_X^0 = 3$, since $H^1(Y, \mathscr{O}_Y) = 0$ by \cite[Theorem $3.1(2)$]{popa shen}. Nevertheless, since $H^1(Y, L) \neq 0$, \cite[Proposition B.$3$]{kebekus schnell} implies $X$ is not Cohen-Macaulay, i.e. $\depth \mathscr{O}_X < 3$. We need Corollary \ref{cor dao takagi} to conclude $\lcdef(X) = 0$.
\end{example}

To extract more from Theorem \ref{intro thm easy}, let us define an invariant
\begin{align*}
    p(X) = \min \{ p \geq 0 : \depth \underline{\Omega}_X^p + p = n - \lcdef(X) \}.
\end{align*}
Although we will not use this in what follows, one can show that $p(X)$ is the smallest index $p$ for which $\gr_{p}^F \mathcal{H}^{\lcdef(X)}\mathbb{D} (\mathbb{Q}_X^H[n])$ is nonzero, using Lemma \ref{lemma hodge-cohom 1}. Moreover, if $\lcdef(X) > 0$ and $X$ satisfies condition $D_m$, then Lemma \ref{lem condtion Dm} says $\depth \underline{\Omega}_X^p+p \geq n$ for $p \leq m$, hence $p(X) \geq m+1$.

\begin{corollary}\label{cor lcdef vs q}
We have
\begin{align*}
    \lcdef(X) \leq n - 2p(X) - 1.
\end{align*}    
In particular, if $X$ satisfies condition $D_m$, then  $\lcdef(X) \leq \max\{ n-2m-3, 0 \}$.
\end{corollary}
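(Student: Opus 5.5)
Let $k = n - \lcdef(X)$, so that $\lcdef(X) \leq n-k$. By Theorem \ref{thm lcdef depth body} and \ref{intro lcdef depth}, $k = \min_p \{\depth \underline{\Omega}_X^p + p\}$, and $p(X)$ is the smallest $p \geq 0$ at which this minimum is attained. The first step is to dispose of the degenerate case $p(X) = 0$. Since $\lcdef(X) \leq n-1$ always (this follows from $\depth \underline{\Omega}_X^p \geq 1$), the inequality $\lcdef(X) \leq n - 1$ is exactly the claim in that case. From now on assume $p(X) \geq 1$.

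The key step applies Theorem \ref{thm main} with $k+1$ in place of $k$ and $m = p(X)-1$. By minimality of $p(X)$, we have $\depth \underline{\Omega}_X^p + p \geq k+1$ for all $p \leq p(X)-1$. For negative $p$ the Du Bois complex vanishes, so its depth is $+\infty$ and the inequality is vacuous. Theorem \ref{thm main} then gives $\depth \underline{\Omega}_X^p + p \geq k+1$ for all $p \geq (k+1) - (p(X)-1) - 2 = k - p(X)$.

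Since $\depth \underline{\Omega}_X^{p(X)} + p(X) = k < k+1$, the index $p(X)$ cannot lie in that range. Hence $p(X) < k - p(X)$, i.e. $2p(X) \leq k-1$. Substituting $k = n - \lcdef(X)$ gives $\lcdef(X) \leq n - 2p(X) - 1$.

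For the second statement, assume condition $D_m$. If $\lcdef(X) = 0$ there is nothing to prove. Otherwise $\lcdef(X) > 0$, so $k < n$, while Lemma \ref{lem condtion Dm} gives $\depth \underline{\Omega}_X^p + p \geq n > k$ for all $p \leq m$. Therefore $p(X) \geq m+1$, and the first part yields $\lcdef(X) \leq n - 2m - 3$.

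The only delicate point is the index bookkeeping in applying Theorem \ref{thm main}. One must use the strict inequality coming from minimality, which is why $k+1$ replaces $k$. One must also make sure that negative indices $p$, and the edge cases $p(X) = 0$ and $m < 0$, cause no trouble.
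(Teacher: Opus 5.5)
Your proof is correct and is essentially the paper's argument. The paper feeds the strict inequality $\depth \underline{\Omega}_X^p + p \geq n - \lcdef(X) + 1$ for $p \leq p(X)-1$ into Theorem \ref{intro thm easy}, whereas you feed it directly into Theorem \ref{thm main} with $m = p(X)-1$, which is the same mechanism without the ceiling bookkeeping; your handling of the $D_m$ statement via Lemma \ref{lem condtion Dm} and $p(X) \geq m+1$ matches the remark the paper makes just before the corollary.
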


The latter statement was proved in \cite[Corollary $7.5$]{popa park lefschetz}.

\begin{proof}
By the definition of $p(X)$,
\begin{align*}
    \depth \underline{\Omega}_X^p + p \geq n-\lcdef(X) + 1 \hspace{.5cm} \text{ for all } p \leq p(X)-1.
\end{align*}
Theorem \ref{intro thm easy} implies $p(X) - 1 < ((n-\lcdef(X)+1)-3)/2$. We obtain the result by rearranging terms.
\end{proof}

The following example shows the first inequality in Corollary \ref{cor lcdef vs q} is strictly stronger than the second, where we take $m$ to be the largest value for which $X$ satisfies condition $D_m$.

\begin{example}\label{exmp diamond}
Let $Y$ be a smooth projective $5$-fold, whose Hodge diamond has the following shape:
\[
\begin{array}{ccccccccccc}
&&&&& 1 &&&&&\\
&&&& 0 && 0 &&&&\\
&&& 0 && 1 && 0 &&&\\
&& 0 && 0 && 0 && 0 &&\\
& 0 && 0 && >1 && 0 && 0 &\\
0 && >0 && * && * && >0 && 0\\
& 0 && 0 && >1 && 0 && 0 &\\
&& 0 && 0 && 0 && 0 &&\\
&&& 0 && 1 && 0 &&&\\
&&&& 0 && 0 &&&&\\
&&&&& 1 &&&&&
\end{array}
\]
Such a variety exists by \cite[Theorem $5$]{construction}. Embed $Y$ in projective space and let $X$ be the projective cone over $Y$.

Then $\lcdef(X) = 1$ by \cite[Theorem A]{popa shen}, as $Y$ first fails to have the same cohomology as $\mathbb{P}^5$ in degree $\dim Y-1$. Nevertheless, $\depth \underline{\Omega}_X^0 = 6$ and $\depth \underline{\Omega}_X^1 = 5$ by \cite[Remark $4.5$]{popa shen}, because this failure only occurs at Hodge degree $2$. From this, we conclude $p(X) = 2$.

Nevertheless, $X$ fails to satisfy condition $D_1$ by \cite[Theorem $7.1$]{popa park lefschetz}. Indeed, the Hodge diamond of $X$ does not satisfy Hodge symmetry on its two outermost layers, due to the nonvanishing of $H^{1,4}(Y)$. It does satisfy condition $D_0$, so we take $m=0$.

We conclude
\begin{align*}
    \lcdef(X) = 1 = n-2p(X)-1 < n - 2m-3 = 3.
\end{align*}
\end{example}

With stronger assumptions, we can use Corollary \ref{cor lcdef vs q} to determine $\lcdef(X)$.

\begin{corollary}[= Theorem \ref{intro dt thm}]\label{cor lcdef = body}
Suppose $X$ satisfies $\depth \underline{\Omega}_X^p +p \geq n$ for all $p \leq m-1$. If $\depth \underline{\Omega}_X^m \leq m+3$, then
\begin{align*}
    \lcdef(X) = \max \{ 0, n-\depth \underline{\Omega}_X^m -m \}.
\end{align*}
The first assumption holds if $X$ satisfies condition $D_{m-1}$.
\end{corollary}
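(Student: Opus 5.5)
The plan is to squeeze $\lcdef(X)$ between a lower bound coming from \ref{intro lcdef depth} (equivalently Theorem \ref{thm lcdef depth body}) and an upper bound coming from Corollary \ref{cor easy}. Set $\delta = \depth \underline{\Omega}_X^m + m$. The hypothesis $\depth \underline{\Omega}_X^m \leq m+3$ says exactly that $\delta \leq 2m+3$.

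For the lower bound, \ref{intro lcdef depth} gives
\begin{align*}
\lcdef(X) = n - \min_p \{\depth \underline{\Omega}_X^p + p\} \geq n - \delta.
\end{align*}
Together with the trivial bound $\lcdef(X) \geq 0$, this yields $\lcdef(X) \geq \max\{0, n-\delta\}$.

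For the upper bound, put $k = \min\{n, \delta\}$, so that $n - k = \max\{0, n-\delta\}$. I will apply Corollary \ref{cor easy} with this $k$, which requires $\depth \underline{\Omega}_X^p + p \geq k$ for all $p \leq \lceil (k-3)/2 \rceil$.

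1. Since $k \leq \delta \leq 2m+3$, we get $\lceil (k-3)/2 \rceil \leq \lceil 2m/2 \rceil = m$. So it suffices to verify the inequality for $p \leq m$.
2. For $p \leq m-1$, the first hypothesis gives $\depth \underline{\Omega}_X^p + p \geq n \geq k$.
3. For $p = m$, we have $\depth \underline{\Omega}_X^m + m = \delta \geq k$ by the choice of $k$.

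Corollary \ref{cor easy} then gives $\lcdef(X) \leq n-k = \max\{0, n-\delta\}$. Combined with the lower bound, this proves the stated equality.

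For the final assertion, I invoke Lemma \ref{lem condtion Dm} with $m-1$ in place of $m$. If $X$ satisfies condition $D_{m-1}$, that lemma gives $\depth \underline{\Omega}_X^p + p \geq n$ for all $p \leq m-1$, which is the first assumption.

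There is no serious obstacle here. The only point needing care is the index arithmetic $\lceil (k-3)/2\rceil \leq m$, and it is precisely the hypothesis $\depth \underline{\Omega}_X^m \leq m+3$ that makes this hold.
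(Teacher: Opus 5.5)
Your proof is correct and is essentially the paper's argument. The paper assumes the inequality $\max\{0, n-\depth \underline{\Omega}_X^m - m\} \leq \lcdef(X)$ is strict, deduces $p(X) \geq m+1$, and applies Corollary \ref{cor lcdef vs q} to get $\lcdef(X) \leq n-2m-3 \leq n - \depth \underline{\Omega}_X^m - m$, a contradiction. Since Corollary \ref{cor lcdef vs q} is a one-line consequence of Corollary \ref{cor easy}, your direct application of Corollary \ref{cor easy} with $k = \min\{n, \delta\}$ just unwinds this, and avoids both the auxiliary invariant $p(X)$ and the argument by contradiction.
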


\begin{proof}
Certainly, $\max \{ 0, n-\depth \underline{\Omega}_X^m -m \} \leq \lcdef(X)$. Suppose the inequality is strict. This says $p(X) \geq m+1$, so Corollary \ref{cor lcdef vs q} implies
\begin{align*}
    \lcdef(X) \leq n-2m-3 \leq n - \depth \underline{\Omega}_X^m - m.
\end{align*}
This is contrary to our assumption.
\end{proof}

We also obtain another proof of a recent theorem of Hiatt \cite[Theorem $0.4$]{hiatt}.

\begin{corollary}
Suppose $X$ has pre-$m$-rational singularities for some $m \leq n-2$. Then
\begin{align*}
    \depth \underline{\Omega}_X^p \geq m+2 \hspace{.5cm} \text{ for all } 0 \leq p \leq n
\end{align*}
\end{corollary}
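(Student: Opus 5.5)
Pre-$m$-rational singularities imply condition $D_m$, so Lemma \ref{lem condtion Dm} gives
\begin{align*}
    \depth \underline{\Omega}_X^p + p \geq n \hspace{.5cm} \text{ for all } p \leq m.
\end{align*}
The plan is to combine this with Theorem \ref{thm main} for a suitable choice of $k$, and then handle the remaining small values of $p$ using the hypothesis directly.

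\textbf{Step 1: large $p$.} Apply Theorem \ref{thm main} with $k = n$. The inequality $\depth \underline{\Omega}_X^p + p \geq n$ then holds for all $p \geq n-m-2$. For such $p$ with $p \leq n$, this gives
\begin{align*}
    \depth \underline{\Omega}_X^p \geq n - p \geq 0,
\end{align*}
which is not yet the bound we want. So a better choice is $k = m+2+p$ for each fixed target $p$, i.e.\ we want $\depth \underline{\Omega}_X^p + p \geq m+2+p$ for every $p$. Set $k_p = m+2+p$. Theorem \ref{thm main}, applied with $k = k_p$ and the given $m$, yields the conclusion at index $p$ as soon as two things hold:
\begin{enumerate}
    \item $p \geq k_p - m - 2 = p$, which is automatic;
    \item $\depth \underline{\Omega}_X^{q} + q \geq m+2+p$ for all $q \leq m$.
\end{enumerate}
From $D_m$ we have $\depth \underline{\Omega}_X^{q} + q \geq n$ for $q \leq m$. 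Hence condition (2) holds whenever $n \geq m+2+p$, that is, for $p \leq n-m-2$.

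\textbf{Step 2: remaining $p$.} For $n-m-2 < p \leq n$, Step 1 instead gives only $\depth \underline{\Omega}_X^p \geq n-p$. This is smaller than $m+2$, so a different input is needed. Here I would use the pre-$m$-rational hypothesis directly. Pre-$m$-rationality says $\mathcal{H}^i \mathbb{D}(\underline{\Omega}_X^{n-q})[-n] = 0$ for $i > 0$ and $q \leq m$, i.e.\ $\underline{\Omega}_X^{n-q}$ is maximal Cohen--Macaulay in the dual sense: its depth is $n$. Writing $q = n-p$, this covers exactly $p \geq n-m$, giving $\depth \underline{\Omega}_X^p = n \geq m+2$ (here we use $m \leq n-2$).

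\textbf{Step 3: the gap $p = n-m-1$.} The one index not covered by Steps 1 and 2 is $p = n-m-1$. I expect this boundary index to be the main obstacle. Step 1 at $k = n$ gives only $\depth \underline{\Omega}_X^{n-m-1} \geq m+1$, one short of what we need. To gain the extra unit, I would rerun the proof of Theorem \ref{thm main} for the object $\mathbb{D}\mathcal{K}_X^\bullet$ with $k = n+1$. Under $D_m$, the Du Bois and intersection Du Bois complexes agree for $p \leq m$, so
\begin{align*}
    \gr_p^F \dr(\mathbb{D}\mathcal{K}_X^\bullet) = 0 \hspace{.5cm} \text{ for all } p \leq m
\end{align*}
entirely, not merely in high cohomological degrees. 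Proposition \ref{prop main} then applies with arbitrary $c$, giving $\gr_p^F \dr(\mathbb{D}\mathcal{K}_X^\bullet) = 0$ for $p \geq n-m-1$. Hence for $p = n-m-1$ we get
\begin{align*}
    \mathbb{D}\underline{\Omega}_X^p \cong \mathbb{D} I\underline{\Omega}_X^p.
\end{align*}
This reduces the claim to the estimate $\depth I\underline{\Omega}_X^p \geq n-p = m+1$. That still falls one short, so the final step would exploit the symmetry $I\underline{\Omega}^p \leftrightarrow I\underline{\Omega}^{n-p}$, under which index $n-m-1$ corresponds to index $m+1$ and the relevant depth relates to $D_m$-type vanishing. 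I would try to extract the extra unit from there; this is where the real work lies.
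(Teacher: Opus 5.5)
Your Steps 1 and 2 match the paper exactly. For $p \leq n-m-2$, you apply Theorem \ref{thm main} with $k = p+m+2$, using the input from Lemma \ref{lem condtion Dm}. For $p \geq n-m$, you use the pre-$m$-rational hypothesis directly. At $p = n-m-1$, your reduction to $\underline{\Omega}_X^{n-m-1} \cong I\underline{\Omega}_X^{n-m-1}$ is also what the paper uses; it cites \cite[Proposition $7.4$]{popa park lefschetz} instead of rederiving it.

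One correction to your derivation. Proposition \ref{prop main} with parameter $c$ only gives vanishing for $i \geq n-1-m-c$ and $j \geq c$. Letting $c$ be arbitrary therefore does not give $\gr_p^F \dr(\mathbb{D}\mathcal{K}_X^\bullet) = 0$ for $p \geq n-m-1$. The full vanishing is true for a different reason: $\mathcal{K}_X^\bullet$ lives in degrees $\leq 0$, so each nonzero $\mathcal{H}^j \mathbb{D}\mathcal{K}_X^\bullet$ has $j \geq 0$ and hence weights $\geq 1-n+j \geq 1-n$. Running the proof of Proposition \ref{prop main} one cohomology at a time then gives the vanishing. In any case, $c = 0$ (your $k = n+1$) already covers every degree you need.

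The genuine gap is the last step, which you leave open. Since $\depth I\underline{\Omega}_X^{n-m-1} \geq m+1$, what remains after the reduction is the single vanishing
\begin{align*}
    \mathcal{H}^{-m-1} \mathbb{D} I\underline{\Omega}_X^{n-m-1} = \mathcal{H}^{n-m-1} I\underline{\Omega}_X^{m+1} = 0,
\end{align*}
where the equality is the self-duality of $\ic_X^H$. Appealing to ``$D_m$-type vanishing'' does not supply this. Condition $D_m$ only concerns indices $p \leq m$, and the index here is $m+1$.

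The paper closes the gap in two moves. First, applying $\gr_{-m-1}^F \dr$ to the triangle \ref{eqn rhm triangle} gives an exact sequence
\begin{align*}
    \mathcal{H}^{n-m-1} \underline{\Omega}_X^{m+1} \rightarrow \mathcal{H}^{n-m-1} I\underline{\Omega}_X^{m+1} \rightarrow \mathcal{H}^1 \gr_{-m-1}^F \dr(\mathcal{K}_X^\bullet).
\end{align*}
The right term vanishes because $\mathcal{K}_X^\bullet$ has cohomology in degrees $\leq 0$ and graded de Rham pieces of a mixed Hodge module sit in non-positive degrees. Second, the left term vanishes by \cite[Proposition C]{psv}, since pre-$m$-rational singularities are pre-$m$-Du Bois. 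This external vanishing for the Du Bois complex at index $m+1$, one beyond the range your hypothesis controls directly, is the missing idea. Nothing in your proposal replaces it.
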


\begin{proof}
Suppose $p \leq n-m-2$. Since $X$ satisfies condition $D_m$, Lemma \ref{lem condtion Dm} implies
\begin{align*}
    \depth \underline{\Omega}_X^q + q \geq n \geq p+m+2 \hspace{.5cm} \text{ for all } q \leq m.
\end{align*}
By Theorem \ref{intro thm main}, we obtain
\begin{align*}
    \depth \underline{\Omega}_X^p \geq m+2.
\end{align*}

If $p \geq n-m$, then $(\mathbb{D} \underline{\Omega}_X^p)[-n]$ is a sheaf in degree zero because $X$ has pre-$m$-rational singularities. This says $\depth \underline{\Omega}_X^p = n$.

For the final case $p = n-m-1$, $\underline{\Omega}_X^{n-m-1} = I\underline{\Omega}_X^{n-m-1}$ by \cite[Proposition $7.4$]{popa park lefschetz}. Since $\depth I \underline{\Omega}_X^{n-m-1} \geq m+1$, it remains to show the vanishing of
\begin{align*}
    \mathcal{H}^{-m-1} \mathbb{D} I \underline{\Omega}_X^{n-m-1} = \mathcal{H}^{n-m-1} I \underline{\Omega}_X^{m+1},
\end{align*}
where the equality holds because $\ic_X^H$ is self-dual, up to twist. Applying $\gr_{-p}^F \dr$ to the triangle \ref{eqn rhm triangle} and taking cohomology, we obtain a long exact sequence
\begin{align*}
    \ldots \rightarrow \mathcal{H}^{n-m-1} \underline{\Omega}_X^{m+1} \rightarrow \mathcal{H}^{n-m-1} I \underline{\Omega}_X^{m+1} \rightarrow \mathcal{H}^1 \gr_{-m-1}^F \dr( \mathcal{K}_X^\bullet) \rightarrow \ldots
\end{align*}
Since $X$ is pre-$m$-Du Bois, we have $\mathcal{H}^{n-m-1} \underline{\Omega}_X^{m+1} = 0$ by \cite[Proposition C]{psv}. Moreover, $\mathcal{K}_X^\bullet$ has cohomology in degrees $\leq 0$ and $\gr_{-m-1}^F \dr(-)$ is concentrated in non-positive degrees, so $\mathcal{H}^1 \gr_{-m-1}^F \dr( \mathcal{K}_X^\bullet) = 0$. The result follows.
\end{proof}

\section{Perversity Defect}\label{section final}

\subsection{Perversity Defect}
This final section is motivated by a comparison of $\lcdef(X)$ with $n - \depth \mathscr{O}_X$. Ogus \cite{ogus} showed $\depth \mathscr{O}_X \geq 2$ implies $\lcdef(X) \leq n-2$, and Dao-Takagi \cite{dao takagi} further proved $\depth \mathscr{O}_X \geq 3$ implies $\lcdef(X) \leq n-3$. The pattern fails beyond this; see \cite[Example $2.11$]{dao takagi} or \cite{kim venkatesh} for examples where $X$ is a toric fourfold. Nevertheless, Dao-Takagi prove that if $\depth \mathscr{O}_X \geq 4$, then $\lcdef(X) \leq n-4$ if and only if $X$ has torsion local analytic Picard groups. 

We aim to generalize this picture by asking what is needed to ensure $\lcdef(X) \leq n-k$, provided we only know $\depth \underline{\Omega}_X^p + p \geq k$ for $p \leq m$ for some fixed value of $m$. Our answer is phrased in terms of the perversity defects of Definition \ref{defn pdef}.

\begin{theorem}[= Theorem \ref{intro thm pdef}]\label{thm pdef}
Let $k$ and $m$ be integers. We have $\lcdef(X) \leq n-k$ if and only if $\depth \underline{\Omega}_X^p + p \geq k$ for all $p \leq m$ and $\pdef(X, k-2m-4) \leq n-k$.
\end{theorem}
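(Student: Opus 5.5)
The forward implication should be formal. If $\lcdef(X) \leq n-k$, then Theorem \ref{thm lcdef depth body} together with \ref{eqn lcdef top} gives $\depth \underline{\Omega}_X^p + p \geq k$ for every $p$, and in particular for $p \leq m$. The chain of inequalities \ref{eqn inequality chain} gives $\pdef(X,d) \leq \lcdef(X) \leq n-k$ for every $0 \leq d \leq n$. If $d = k-2m-4$ falls outside $[0,n]$, I would adopt the convention $p_d = p_0$ for $d<0$ and $p_d=p_n$ for $d>n$, consistent with $p_d(x) = -\min\{x,d\}$ clipped to the allowed range.

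For the converse I would use Theorem \ref{thm lcdef depth body}: it suffices to show $\depth \underline{\Omega}_X^p + p \geq k$ for all $p$. The hypothesis covers $p \leq m$, and Theorem \ref{thm main} covers $p \geq k-m-2$. So the whole content is the middle range $m < p < k-m-2$. This range is nonempty only when $d = k-2m-4 \geq 0$, and it has exactly $d+1$ elements, which explains the choice of $d$. By \ref{eqn depth du bois}, I must show
\begin{align*}
\mathcal{H}^j \gr_p^F \dr\bigl(\mathbb{D}(\mathbb{Q}_X^H[n])\bigr) = 0 \quad \text{for } j \geq n-k+1 \text{ and } m < p < k-m-2.
\end{align*}
As in the proof of Theorem \ref{thm main}, I would pass to $M^\bullet = \mathbb{D}\mathcal{K}_X^\bullet$, which has weights $\geq -n+1$, via the long exact sequence \ref{eqn:ic_les} and Lemma \ref{lem ic vanishing}. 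By Corollary \ref{cor hodge-cohom 1}, the problem then becomes one about the individual mixed Hodge modules $N_j = \mathcal{H}^j M^\bullet$ for $j \geq n-k+1$. Each $N_j$ has weights $\geq -n+1+j$ and satisfies $\gr_i^F\dr(N_j)=0$ for $i \leq m$, hence also for $i \geq -j-m-w$ by Lemma \ref{lemma hodge-weight}. The remaining task is to kill $\gr_i^F\dr(N_j)$ in the window of $d+1$ values in between.

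The $\pdef$ hypothesis has to enter exactly here. Via the lift of the $p_d$ t-structure to $D^b(\MHM(X))$ from the Remark, and Proposition \ref{prop locally constant}, the condition $\mathbb{D}(\mathbb{Q}_X[n]) \in {}^dD^{\leq n-k}_c(X)$ says the following: on strata $S$ of dimension $d_S \leq d$ the middle-perversity vanishing $\mathcal{H}^i i_S^*=0$ for $i>n-k-d_S$ holds, while on strata of dimension $d_S > d$ one only gets vanishing for $i > n-k-d$. The same holds for $\mathbb{D}\mathcal{K}_X^\bullet$ up to the $\ic$ correction, which is harmless in these degrees.

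From this I would try to deduce that the high perverse cohomologies $N_j$, $j > n-k$, are not arbitrary. Roughly, the part of $\mathbb{D}\mathcal{K}_X^\bullet$ violating ${}^nD^{\leq n-k}$ comes only from strata of dimension $>d$, and it can be shifted by at most $d_S - d$. I would then combine a support bound (for a Hodge module supported in dimension $s$, $\gr_i^F\dr$ vanishes for $i<-s$) with the weight bound of Lemma \ref{lemma hodge-weight}, and aim to show that the two ranges $i \leq m$ and $i \geq -j-m-w$ overlap. Concretely, I would first treat the truncation triangle ${}^{n}\tau_{\leq n-k} M^\bullet \to M^\bullet \to {}^{n}\tau_{>n-k}M^\bullet$, and try to prove the needed vanishing stratum by stratum, using the $E_2$ spectral sequence argument from Proposition \ref{prop main}.

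This step, translating the $p_d$-perversity condition into a gain of exactly $d+1$ in the Hodge-degree window, is the main obstacle. I expect to need a careful local computation near a point $x$ of a stratum, possibly via Lemma \ref{lemma link} and restriction to a normal slice, which reduces strata dimension and lets one induct on $d$.
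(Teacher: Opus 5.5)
Your forward direction is correct and is exactly the paper's argument. The converse, however, has a genuine gap. After reducing to the window $m<p<k-m-2$, you leave the decisive step (extracting a ``gain of exactly $d+1$'' from $\pdef(X,d)\le n-k$) as an acknowledged obstacle, and the mechanism you sketch cannot supply it. Localizing the violation of ${}^nD^{\le n-k}$ to strata of dimension $>d$, with perverse shift at most $d_S-d$, does not push the offending perverse degrees $j$ up to $n-k+d+2$. That is the point where the windows $i\le m$ and $i\ge -j-m-w$ (with $w=-n+1$) from Lemma \ref{lemma hodge-weight} start to cover everything; the critical degrees $n-k+1\le j\le n-k+d+1$ are left untouched. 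A support bound $\gr_i^F\dr=0$ for $i<-s$ only bounds $i$ from below by $-s$, which gets weaker as the support grows, so it cannot produce a gain on large strata. It also fails for general mixed Hodge modules, since Tate twists shift $F$. More fundamentally, near a stratum with $d_S>d$ the hypothesis $\pdef(X,d)\le n-k$ only gives stalk vanishing in degrees $>n-k-d$. The missing information there must come from the depth hypothesis, not from $\pdef$. The paper never analyzes the middle range of $\depth\underline{\Omega}_X^p$ on $X$ directly; those bounds only follow at the end, via Theorem \ref{thm lcdef depth body}.

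The missing idea is to argue stalkwise with Lemma \ref{lemma link} and split by stratum dimension rather than by Hodge degree. The goal is $\tilde H^i(U\setminus\{x\},\mathbb{Q})=0$ for $i\le k-2+d_S$. For $d_S\le d=k-2m-4$ one has $\min\{d_S,d\}=d_S$, so $\pdef(X,d)\le n-k$ gives exactly this; that is the only role of $\pdef$. For $d_S\ge k-2m-3$ one uses Proposition \ref{prop depth link}. Let $Y$ be a complete intersection of $k-2m-3$ general hyperplanes. Iterating Lemma \ref{lem depth hyperplane} gives $\depth\underline{\Omega}_Y^p+p\ge 2m+3$ for $p\le m$. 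Since $\lceil (2m+3-3)/2\rceil=m$, no middle range remains on $Y$, and Corollary \ref{cor easy} gives $\lcdef(Y)\le\dim Y-2m-3$. The local product structure $U\simeq V\times B$ along Whitney strata gives $\tilde H^i(U\setminus\{y\})\simeq\tilde H^{i-2(d_S-d_T)}(V\setminus\{y\})$, as in \eqref{eqn hypersurface strata}. This converts the bound on $Y$ into the required vanishing at every point of every stratum meeting $Y$, i.e.\ every stratum with $d_S\ge k-2m-3$. The key asymmetry is that depth conditions only descend to general hyperplane sections, whereas stalk vanishing transfers back up. So the ``$d+1$'' is simply the codimension of $Y$, and $\pdef$ covers exactly the strata that $Y$ misses. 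Your remark about normal slices points this way, but without the stratum split and this two-way transfer the argument does not close, and no induction on $d$ is needed.

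One further caution: the statement fails when $m=-1$ and $k=n+1$. For smooth $X$ of dimension $n\ge 1$, one has $\pdef(X,n-1)=-1$ while $\lcdef(X)=0$. In the paper's argument $Y$ is then zero-dimensional, where Corollary \ref{cor easy} breaks down, so one should assume e.g.\ $k\le n$ or $m\ge 0$.
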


This will follow from a more precise result, relating the condition $\depth \underline{\Omega}_X^p + p \geq k$ for $p \leq m$ to the cohomology of small punctured neighborhoods.

\begin{proposition}\label{prop depth link}
Let $k$ and $m$ be integers. Suppose $\depth \underline{\Omega}_X^p + p \geq k$ for all $p \leq m$. If $x \in X$ lies in a Whitney stratum of dimension $d_S \geq k-2m-3$, then
\begin{align*}
    \Tilde{H}^i(U \setminus \{ x \}, \mathbb{Q}) = 0 \hspace{.5cm} \text{ for all } i \leq k - 2 + d_S,
\end{align*}
where $U$ is a small Stein analytic neighborhood of $x$.
\end{proposition}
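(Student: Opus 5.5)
The plan is to reduce to the case of a zero-dimensional stratum by taking a normal slice through $x$. On that slice, the hypothesis holds with $k$ replaced by $k-d_S$ and the same $m$. Corollary \ref{cor easy} then gives the required local vanishing.

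\textbf{Step 1 (topological reduction).} Let $T$ be a normal slice to $S$ at $x$. Concretely, $T = X \cap H_1 \cap \cdots \cap H_{d_S}$ for general smooth hypersurfaces $H_i$ of an ambient space through $x$, transverse to $S$ and to all strata near $x$. It has dimension $n' = n - d_S$, and $x$ is a point stratum of $T$. By the local triviality of Whitney stratifications, a small Stein neighborhood satisfies $U \cong U_T \times B^{2d_S}$ with $B^{2d_S}$ a ball. Hence $U \setminus \{x\}$ is homotopy equivalent to the join of the link of $x$ in $T$ with $S^{2d_S-1}$, which gives
\begin{align*}
\Tilde{H}^i(U \setminus \{x\}, \mathbb{Q}) \cong \Tilde{H}^{i-2d_S}(U_T \setminus \{x\}, \mathbb{Q}).
\end{align*}
Set $k' = k - d_S$. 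The claim then becomes $\Tilde{H}^{i'}(U_T \setminus \{x\}, \mathbb{Q}) = 0$ for $i' \leq k-2-d_S = k'-2$. By Lemma \ref{lemma link}, applied to $T$ at the point stratum $\{x\}$, this follows from $\lcdef(T) \leq n' - k'$ near $x$.

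\textbf{Step 2 (depth on the slice).} I want to show $\depth \underline{\Omega}_T^p + p \geq k'$ for all $p \leq m$. The inclusion $T \hookrightarrow X$ is non-characteristic for $\mathbb{Q}_X^H[n]$, so $\mathbb{Q}_T^H[n']$ is a shifted restriction of it. The restriction is compatible with the Hodge filtration via the standard Koszul-type description of Saito. On Du Bois complexes, cutting by one general hypersurface $H$ yields a triangle
\begin{align*}
\underline{\Omega}_{X\cap H}^{p-1}(-H) \rightarrow \underline{\Omega}_X^p|_{X\cap H} \rightarrow \underline{\Omega}_{X\cap H}^p \xrightarrow{+1}.
\end{align*}
Iterating this exhibits $\underline{\Omega}_T^p$ as being built out of the restrictions $\underline{\Omega}_X^{p-j}|_T$ for $0 \le j \le d_S$ (up to twists). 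Restricting to a general hypersurface lowers depth by at most one, so $\depth \underline{\Omega}_X^{q}|_T \geq \depth \underline{\Omega}_X^q - d_S$. Each term indexed by $j$ in the $\underline{\Omega}_T^p$ bound carries a compensating shift of $j$ from the triangles. Since $p - j \leq m$ whenever $p \leq m$, this gives
\begin{align*}
\depth \underline{\Omega}_T^p + p \geq \min_{0 \le j \le d_S}\{\depth \underline{\Omega}_X^{p-j} + (p-j)\} - d_S \geq k - d_S = k'.
\end{align*}

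\textbf{Step 3 (conclude).} By Corollary \ref{cor easy} for $T$, $\lcdef(T) \leq n'-k'$ holds provided $m \geq \lceil (k'-3)/2 \rceil$. This follows from $2m \geq k - d_S - 3$, which is exactly the hypothesis $d_S \geq k - 2m - 3$. Step 1 then finishes the proof.

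The main obstacle is Step 2: justifying the depth bookkeeping for non-characteristic restriction to general slices through $x$. In particular, one has to check that the hypersurfaces can be chosen through $x$, and still be general enough, that the restriction triangles hold, and that each restriction costs exactly one unit of depth. I would first try to do this at the level of $\mathbb{D}$ and $\gr_p^F \dr$, using \ref{eqn depth du bois} together with the non-characteristic restriction compatibility $i^! \mathbb{D} = \mathbb{D} i^*$. This would avoid working with the triangles directly.
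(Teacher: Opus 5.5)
Your argument is correct in outline, but it takes a different route from the paper.

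The paper cuts $X$ by $k-2m-3$ \emph{general} hyperplanes, a number independent of $x$. Lemma \ref{lem depth hyperplane} then applies as stated and gives $\depth \underline{\Omega}_Y^p + p \geq 2m+3$ for $p \leq m$. Corollary \ref{cor easy} is used exactly at its threshold ($\lceil (2m+3-3)/2\rceil = m$) to get $\lcdef(Y) \leq \dim Y - 2m - 3$. The vanishing is transported back to $X$ via \ref{eqn hypersurface strata}, which uses the local product structure and \cite[Lemma 15.10]{popa park lefschetz} to compare punctured neighborhoods at strata of different dimensions, together with constancy along strata. Since the strata meeting $Y$ are exactly those with $d_S \geq k-2m-3$, a single $Y$ handles every relevant $x$.

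You instead cut by $d_S$ hypersurfaces through $x$, so that $x$ becomes a point stratum of the slice. This makes the topology elementary: it is just a $2d_S$-fold suspension. Your threshold check $m \geq \lceil (k-d_S-3)/2 \rceil \Leftrightarrow d_S \geq k-2m-3$ is right.

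The price is paid in Step 2, because your slice is not general.
\begin{itemize}
\item The triangle $\underline{\Omega}^{p-1}_{X\cap H}(-H) \to \underline{\Omega}^p_X|_{X\cap H} \to \underline{\Omega}^p_{X\cap H}$ is established for general $H$, but it is not off the shelf for $H$ through a prescribed point. So ``restriction to a general hypersurface'' is not literally what you are doing.
\item What is really needed is non-characteristicness. A slice transverse to $S$ at $x$ is, by Whitney's condition (a), transverse to all strata near $x$, hence non-characteristic for $\mathbb{Q}_X^H[n]$ there. The proof of Lemma \ref{lem depth hyperplane} (non-characteristic restriction of mixed Hodge modules, i.e.\ the route you propose at the end) then goes through in a neighborhood of $x$. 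That suffices, since everything is local.
\item Your iteration is mis-indexed. Unwinding the triangles expresses $\underline{\Omega}^p_T$ through the pieces $\underline{\Omega}^q_X|_T[p-q]$ for all $0 \leq q \leq p$, already for a single hypersurface, not only for $p - d_S \leq q \leq p$. This is harmless, because every such $q$ is $\leq m$.
\end{itemize}

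You can also bypass the special slice entirely by taking the $d_S$ hyperplanes general. They meet $S$ in finitely many points $y$, each a point stratum of the section, so Lemma \ref{lem depth hyperplane} applies as stated. The punctured neighborhood at $y$ has the same cohomology as the one at $x$ by constancy along $S$. That is just the paper's proof with $d_S$ cuts in place of $k-2m-3$.
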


\begin{proof}
Fix a Whitney stratification on $X$. Let $\iota: Y \rightarrow X$ be the inclusion of a complete intersection of $k-2m-3$ general hyperplanes. It induces a stratification on $Y$, such that each stratum $S$ of $X$ intersects $Y$ transversally as a union of strata $T$. Moreover, if $y$ lies in a stratum $T$ of $Y$, there exists a small neighborhood $U$ of $y$ in $X$ of the form $V \times B$, where $V \subset T$ is a small neighborhood and $B$ is a contractible ball. In particular, $U \setminus (U \cap S)$ is homotopy equivalent to $V \setminus (V \cap T)$. Combining this with \cite[Lemma $15.10$]{popa park lefschetz}, we have
\begin{align}\label{eqn hypersurface strata}
    \Tilde{H}^i(U \setminus \{ y \}, \mathbb{Q}) &\simeq \Tilde{H}^{i-2d_S}(U \setminus (U \cap S), \mathbb{Q}) \\ &\simeq \Tilde{H}^{i-2d_S}(V \setminus (V \cap T), \mathbb{Q}) \notag\\
    &\simeq \Tilde{H}^{i-2d_S+2d_T}(V \setminus \{ y \}, \mathbb{Q}) \notag.
\end{align}
Here, $d_S - d_T = \codim_Y X = k-2m-3$. We will continue using this notation throughout the proof.

Lemma \ref{lem depth hyperplane} implies $\depth \underline{\Omega}_Y^p + p \geq 2m+3$ for all $p \leq m$. By Theorem \ref{intro thm easy}, we deduce $\lcdef(Y) \leq \dim Y-2m-3$. In other words, by Lemma \ref{lemma link},
\begin{align*}
    \Tilde{H}^i(V \setminus \{ y \}, \mathbb{Q}) = 0 \hspace{.5cm} \text{ for all } i \leq 2m+1+d_T \text{ and all } y \in Y.
\end{align*}
By \ref{eqn hypersurface strata}, this is equivalent to
\begin{align*}
    \Tilde{H}^i(U \setminus \{ y \}, \mathbb{Q}) = 0 \hspace{.5cm} \text{ for all } i \leq k-2 + d_S \text{ and all } y \in Y.
\end{align*}
The result follows, because the cohomology of a punctured neighborhood depends only on the stratum of the removed point, and the strata $S$ of $X$ with $S \cap Y \neq \emptyset$ are precisely those of dimension $d_S \geq k-2m-3$.
\end{proof}

We required the following lemma, showing the condition $\depth \underline{\Omega}_X^p + p \geq k$ for $p \leq m$ is well-behaved under general hyperplane sections.

\begin{lemma}\label{lem depth hyperplane}
Let $k$ and $m$ be integers, and let $\iota: Y \rightarrow X$ be a general hyperplane section. If $\depth \underline{\Omega}_X^p + p \geq k$ for all $p \leq m$, then $\depth \underline{\Omega}_Y^p + p \geq k-1$ for all $p \leq m$.
\end{lemma}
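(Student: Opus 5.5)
We will use the standard short exact sequence relating Du Bois complexes of $X$ and of a general hyperplane section $Y$, and then a duality/depth count. For a general hyperplane section $\iota: Y \rightarrow X$ (general in a linear system, so that $Y$ is transversal to a Whitney stratification and to all the strata of a hyperresolution), there are exact triangles
\begin{align*}
    \underline{\Omega}_Y^{p-1} \otimes \mathscr{O}_Y(-Y) \rightarrow \underline{\Omega}_X^p \otimes \mathscr{O}_Y \rightarrow \underline{\Omega}_Y^p \xrightarrow{+1},
\end{align*}
which come from the analogous residue/conormal sequences on the smooth components of a hyperresolution. We also use $\underline{\Omega}_X^p \otimes^{L} \mathscr{O}_Y \cong \underline{\Omega}_X^p \otimes \mathscr{O}_Y$ for general $Y$, i.e. the local equation $h$ of $Y$ is a nonzerodivisor on every cohomology sheaf of $\underline{\Omega}_X^p$ and of $\mathbb{D}\underline{\Omega}_X^p$. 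Alternatively, one could phrase everything via $\iota^*$ on mixed Hodge modules and the noncharacteristic restriction formula for $\gr^F_p\dr$; I would choose the Du Bois triangle since it is classical (Du Bois, or \cite{h-topology}-style references).

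Step 1: depth drops by at most one under cutting by a general $h$. Since $\mathbb{D}(E \otimes^L \mathscr{O}_Y) \cong \iota^!\mathbb{D}E$, and this is the cone of $h$ acting on $\mathbb{D}E$ shifted by $1$, the cohomology of $\mathbb{D}_Y(\underline{\Omega}_X^p|_Y)$ in degree $j$ is built from $\mathcal{H}^{j+1}\mathbb{D}\underline{\Omega}_X^p$ (cokernel of $h$) and $\mathcal{H}^{j+2}\mathbb{D}\underline{\Omega}_X^p$ (kernel of $h$, zero by genericity). Hence $\depth_Y (\underline{\Omega}_X^p|_Y) \geq \depth \underline{\Omega}_X^p - 1 \geq k-p-1$ for $p \leq m$.

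Step 2: induction on $p$. For $p=0$ the triangle gives $\underline{\Omega}_Y^0 \cong \underline{\Omega}_X^0|_Y$, so $\depth \underline{\Omega}_Y^0 \geq k-1$. Assume $\depth \underline{\Omega}_Y^{p-1} + (p-1) \geq k-1$, i.e. $\depth \underline{\Omega}_Y^{p-1} \geq k-p$. Dualizing the triangle gives a long exact sequence
\begin{align*}
    \mathcal{H}^{j}\mathbb{D}(\underline{\Omega}_X^p|_Y) \rightarrow \mathcal{H}^{j+1}\mathbb{D}\underline{\Omega}_Y^p \rightarrow \mathcal{H}^{j+1}\mathbb{D}(\underline{\Omega}_Y^{p-1}(-Y)),
\end{align*}
more precisely $\mathbb{D}\underline{\Omega}_Y^p$ sits between $\mathbb{D}(\underline{\Omega}_X^p|_Y)$ and $\mathbb{D}(\underline{\Omega}_Y^{p-1}(-Y))[-1]$. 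So $\mathcal{H}^{j}\mathbb{D}\underline{\Omega}_Y^p$ vanishes once $\mathcal{H}^j\mathbb{D}(\underline{\Omega}_X^p|_Y)=0$ and $\mathcal{H}^{j-1}\mathbb{D}\underline{\Omega}_Y^{p-1}=0$; the first holds for $j > -(k-p-1)$ by Step 1, the second for $j-1 > -(k-p)$, i.e. $j > -(k-p-1)$. Thus $\depth \underline{\Omega}_Y^p \geq k-p-1$, as required. (Twisting by the line bundle $\mathscr{O}_Y(-Y)$ does not affect depth.)

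The main obstacle is justifying the triangle and the genericity claims (Tor-independence and $h$ being a nonzerodivisor on the cohomology sheaves of $\mathbb{D}\underline{\Omega}_X^p$) uniformly for all $p \leq m$; I expect to handle this by choosing $Y$ avoiding the finitely many associated points of all the coherent sheaves $\mathcal{H}^j\underline{\Omega}_X^p$, $\mathcal{H}^j\mathbb{D}\underline{\Omega}_X^p$, together with general position with respect to a fixed hyperresolution, and citing the known restriction property of Du Bois complexes to general hyperplanes.
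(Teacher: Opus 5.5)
Your proof is correct, but it takes a different route from the paper.

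\textbf{The paper's route.} The paper works entirely with mixed Hodge modules. Using Lemma~\ref{lemma hodge-cohom 1} and duality, it rewrites the hypothesis as the vanishing of $\mathcal{H}^{-j}\gr^F_{-p}(\mathbb{Q}^H_X[n])$ for $j \geq n-k+1$ and $p \leq m$. It then notes that this vanishing survives the noncharacteristic pullback $\iota^*$ (via \cite[Lemma 2.25]{saito 90} and \cite[Theorem 9.3]{schnell}), since $\iota^*\mathbb{Q}^H_X = \mathbb{Q}^H_Y$, and runs the translation backwards on $Y$. The loss of one comes from measuring the same range of $j$ against $\mathbb{Q}^H_Y[n-1]$, since $n-k+1 = (n-1)-(k-1)+1$. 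This treats all $p \leq m$ at once and stays inside the Hodge-module formalism of the rest of the paper.

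\textbf{Your route.} You stay in coherent duality and use three ingredients:
\begin{itemize}
\item the classical restriction triangle $\underline{\Omega}^{p-1}_Y(-Y) \to L\iota^*\underline{\Omega}^p_X \to \underline{\Omega}^p_Y \xrightarrow{+1}$, obtained by restricting a hyperresolution to $Y$ and using the conormal sequence;
\item the identity $\mathbb{D}_Y L\iota^* = \iota^!\mathbb{D}_X$;
\item an induction on $p$.
\end{itemize}
This avoids Hodge modules and noncharacteristic restriction entirely. It also makes concrete how the hypotheses for different $p$ interact: the bound for $\underline{\Omega}^p_Y$ is fed by the bound for $\underline{\Omega}^{p-1}_Y$.

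\textbf{Index slips to fix.} Neither affects your conclusion.
\begin{itemize}
\item In Step~1, $\iota_*\iota^!G$ is the complex $G \xrightarrow{h} G(Y)$ with $G$ in degree $0$. So $\mathcal{H}^j(\iota^!G)$ is an extension of $\ker(h \mid \mathcal{H}^jG)$ by $\mathrm{coker}(h \mid \mathcal{H}^{j-1}G)$, up to twist. It is not built from $\mathcal{H}^{j+1}$ and $\mathcal{H}^{j+2}$; with those indices the depth would go up, not down by one.
\item With the correct indices, $\depth L\iota^*E \geq \depth E - 1$ holds for every Cartier divisor. So you do not need $h$ to be a nonzerodivisor on the sheaves $\mathcal{H}^j\mathbb{D}\underline{\Omega}^p_X$. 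Genericity of $Y$ is only needed to get the restriction triangle with $L\iota^*\underline{\Omega}^p_X$ as its middle term.
\item The first displayed sequence in Step~2 has its arrows and degrees garbled. However, the triangle $\mathbb{D}(\underline{\Omega}^{p-1}_Y(-Y))[-1] \to \mathbb{D}\underline{\Omega}^p_Y \to \mathbb{D}(L\iota^*\underline{\Omega}^p_X) \xrightarrow{+1}$ that you state right after it is correct. It gives exactly the vanishing you use.
\end{itemize}
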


\begin{proof}
Our assumption says precisely that 
\begin{align*}
     \mathcal{H}^j \gr_p^F \dr( \mathbb{D}(\mathbb{Q}_X^H[n])) = 0 \hspace{.2cm} \text{ for all } j \geq -k+1+n \text{ and } p \leq m.
\end{align*}
Applying Lemma \ref{lemma hodge-cohom 1} and dualizing, we see this is equivalent to
\begin{align*}
     \mathcal{H}^{-j} \gr_{-p}^F (\mathbb{Q}_X^H[n]) = 0 \hspace{.2cm} \text{ for all } j \geq -k+1+n \text{ and } p \leq m.
\end{align*}
Pulling back by $\iota^*$ and using the fact that $Y$ is general, \cite[Lemma $2.25$]{saito 90} and \cite[Theorem $9.3$]{schnell}, together with the identity $\iota^* \mathbb{Q}_X^H = \mathbb{Q}_Y^H$ imply
\begin{align*}
     \mathcal{H}^{-j} \gr_{-p}^F(\mathbb{Q}_Y^H[n-1]) = 0 \hspace{.2cm} \text{ for all } j \geq -k+1+n \text{ and } p \leq m.
\end{align*}
Applying the same argument in reverse, we conclude the proof.
\end{proof}

\begin{proof}[Proof of Theorem \ref{thm pdef}]
The only if direction follows from Theorem \ref{thm lcdef depth body} and the inequalities \ref{eqn inequality chain}. For the converse, we need to show
\begin{align*}
    \Tilde{H}^i(U \setminus \{ x\}, \mathbb{Q}) = 0 \hspace{.5cm} \text{ for all } i \leq k-2+d_S \text{ and all } x \in X.
\end{align*}
For strata of dimension $d_S \geq k-2m-3$, this follows from the depth hypothesis due to Proposition \ref{prop depth link}. Otherwise, $d_S \leq k-2m-4$, and the vanishing follows from the condition $\pdef(X, k-2m-4) \leq n-k$, interpreted in terms of cohomology groups of punctured neighborhoods, as in Lemma \ref{lemma link}.
\end{proof}

Theorem \ref{intro thm pdef} has several interesting corollaries. For example, if $m = \lceil (k-3)/2 \rceil$, the perversity defect hypothesis is vacuous, so the statement is equivalent to Theorem \ref{intro thm easy}. In the next case, taking $k=2m+4$, we obtain:

\begin{corollary}\label{cor cdef}
Let $k$ be an even integer. Then $\lcdef(X) \leq n-k$ if and only if $\depth \underline{\Omega}_X^p + p \geq k$ for all $p \leq (k-4)/2$ and $\cdef(X) \leq n-k$.
\end{corollary}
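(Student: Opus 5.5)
This is Theorem \ref{thm pdef} specialized to $k = 2m+4$, so $m = (k-4)/2$, which is an integer because $k$ is even. With this choice, $k-2m-4 = 0$. Hence the perversity-defect hypothesis $\pdef(X, k-2m-4) \leq n-k$ becomes $\pdef(X,0) \leq n-k$. By Definition \ref{defn pdef}, this is exactly $\cdef(X) \leq n-k$. The depth hypothesis of Theorem \ref{thm pdef} reads $\depth \underline{\Omega}_X^p + p \geq k$ for all $p \leq m = (k-4)/2$. So the plan is simply to substitute these values, and both directions of the equivalence are inherited verbatim.

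The only point to check is that Theorem \ref{thm pdef} is meaningful for this value of the perversity index. The perversities $p_d$ were defined for $0 \leq d \leq n$, and here $d = 0$ lies in that range. The proof of Theorem \ref{thm pdef} uses $\pdef$ only through Lemma \ref{lemma link}, applied to strata with $d_S \leq k-2m-4 = 0$, that is, to the zero-dimensional strata. In that case $\min\{d_S, 0\} = 0$, which is consistent with the description of $\cdef(X) \leq n-k$ in terms of $\tilde H^i(U\setminus\{x\},\mathbb{Q}) = 0$ for $i \leq k-2$.

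Degenerate cases should also be noted. If $k \leq 2$, then $m < 0$, and the depth condition is vacuous, since it ranges over $p \leq m$ and only $p \geq 0$ matter. In that case the statement says $\lcdef(X) \leq n-k$ if and only if $\cdef(X) \leq n-k$. This still follows from Theorem \ref{thm pdef}, or directly: one direction is \ref{eqn inequality chain}, and the other follows from Proposition \ref{prop depth link} together with Lemma \ref{lemma link}. I expect no real obstacle. The only potential subtlety is confirming that Theorem \ref{thm pdef} holds for all integers $m$, including negative ones, and its proof does not use nonnegativity of $m$.
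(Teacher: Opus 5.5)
Your proposal is correct and is exactly the paper's argument: the corollary is Theorem~\ref{thm pdef} with $m=(k-4)/2$, so $k-2m-4=0$ and the perversity-defect condition becomes $\pdef(X,0)=\cdef(X)\leq n-k$. Your remarks on the degenerate cases $k\leq 2$ are also fine, since Theorem~\ref{thm pdef} is stated and proved for arbitrary integers $k$ and $m$.
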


\begin{example}[Dao-Takagi Theorem for depth four]\label{example dt}
We recover the motivating result of Dao-Takagi \cite{dao takagi}: if $\depth \mathscr{O}_X \geq 4$, then $\lcdef(X) \leq n-4$ if and only if $X$ has torsion local analytic Picard groups.

Indeed, suppose $\depth \mathscr{O}_X \geq 4$. Then $\depth \underline{\Omega}_X^0 \geq 4$ and $\cdef(X) \leq \lcdef(X) \leq n-3$ by Corollary \ref{cor dao takagi}. Using Corollary \ref{cor cdef} with $k=4$ and interpreting $\cdef(X)$ in terms of cohomology groups of punctured neighborhoods as in Lemma \ref{lemma link}, it suffices to show that $H^{2}(U \setminus \{ x\}, \mathbb{Q})$ is isomorphic to $\Pic^{\anloc}(X,x)_\mathbb{Q}$ for all points $x \in X$.

The exponential sequence for $U \setminus \{ x\}$ gives rise to a long exact sequence
\begin{align*}
    \ldots H^1(U \setminus \{ x\}, \mathscr{O})& \rightarrow H^1(U \setminus \{ x\}, \mathscr{O}^*) \\& \rightarrow H^2(U \setminus \{ x\}, \mathbb{Z}) \rightarrow H^2(U \setminus \{ x\}, \mathscr{O}) \rightarrow \ldots
\end{align*}
The depth assumption $\depth \mathscr{O}_X \geq 4$ implies the two outermost terms vanish \cite[Theorem $1.14$]{siu}, so $H^2(U \setminus \{ x\}, \mathbb{Z}) \simeq H^1(U \setminus \{ x \}, \mathscr{O}^*)$. The latter is isomorphic to $\Pic^{\anloc}(X,x)$ by \cite[Section $1$]{kollar}.
\end{example}

We can also take $m = 0$ in Theorem \ref{intro thm pdef} to deduce:

\begin{corollary}\label{cor pdef}
If $k$ is an integer, then $\lcdef(X) \leq n-k$ if and only if $\depth \underline{\Omega}_X^0 \geq k$ and $\pdef(X,k-4) \leq n-k$.
\end{corollary}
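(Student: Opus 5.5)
This is the specialization $m = 0$ of Theorem \ref{thm pdef}, and the plan is simply to substitute and simplify. With $m=0$, the condition ``$\depth \underline{\Omega}_X^p + p \geq k$ for all $p \leq m$'' reduces to a statement about $p=0$ alone. It is vacuous for $p<0$, since $\underline{\Omega}_X^p = 0$ there. To check this against definition \ref{eqn depth du bois}: if $\underline{\Omega}_X^p=0$ then $\mathbb{D}\underline{\Omega}_X^p = 0$, so the maximum defining the depth is taken over the empty set and the depth is $+\infty$. Alternatively, one can use Lemma \ref{lem ic vanishing}-style vanishing for $p<0$, as in the proof of Theorem \ref{thm main}. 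The condition therefore becomes exactly $\depth \underline{\Omega}_X^0 \geq k$. The perversity-defect condition becomes $\pdef(X, k-4) \leq n-k$.

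The only point needing care is that $\pdef(X,d)$ was defined only for $0 \leq d \leq n$, whereas $k-4$ may lie outside this range. I would handle this with the same convention implicit in Theorem \ref{thm pdef}: interpret $p_d(x) = -\min\{x,d\}$ for any integer $d$, or equivalently clamp $d$ into $[0,n]$.

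If $k-4<0$, the perversity function $-\min\{x, k-4\}$ is not literally a perversity. In that range, however, Proposition \ref{prop depth link} already covers every stratum, since $d_S \geq 0 \geq k-3$. The pdef hypothesis is then vacuous in the proof of Theorem \ref{thm pdef}. For the only-if direction, I would need $\pdef(X,0) = \cdef(X) \leq n-k$ under the clamped reading, which follows from \ref{eqn inequality chain}. If $k-4>n$, the condition reads $\lcdef(X)\le n-k$ under clamping, which is tautologically compatible.

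So the proof will be: invoke Theorem \ref{thm pdef} with $m=0$, note that the depth hypotheses for $p<0$ are automatic, and remark on the convention for out-of-range $d$. I expect no real obstacle; the only step worth stating explicitly is this bookkeeping for the range of $d$.
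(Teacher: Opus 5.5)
Your proposal is correct and matches the paper, which obtains this corollary simply by setting $m=0$ in Theorem~\ref{thm pdef}; the conditions for $p<0$ are vacuous because $\underline{\Omega}_X^p=0$ there. One small correction to your bookkeeping: for $k<4$ the function $-\min\{x,k-4\}$ is the constant $4-k$, which \emph{is} a perversity in the paper's sense (it gives a shifted constructible t-structure, so $\pdef(X,k-4)=\cdef(X)+k-4$), but under either this reading or your clamped one the condition follows from $\lcdef(X)\le n-k$ and is unused in the converse, so the argument is unaffected.
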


\begin{example}[Park-Popa Theorem for depth five]\label{exmp lcdef 5}
Let us discuss how this result in the case $k=5$ implies \cite[Corollary G]{popa park lefschetz}: if $n \geq 5$ and $X$ has rational locally analytically $\mathbb{Q}$-factorial singularities, then $\lcdef(X) \leq n-5$ if and only if $X$ has torsion local analytic gerbe groups. We make crucial use of the fact that rational locally analytically $\mathbb{Q}$-factorial singularities are preserved by general hyperplane sections \cite[Proposition $12.1$]{popa park lefschetz}.

Under these hypotheses, we have $\depth \underline{\Omega}_X^0 = \depth \mathscr{O}_X = n$. Moreover, $X$ has torsion local analytic Picard groups, so $\lcdef(X) \leq n-4$ by Example \ref{example dt}.

As in Example \ref{example dt}, the exponential long exact sequence shows $X$ has local analytic gerbe groups if and only if
\begin{align*}
    H^3(U \setminus \{ x\}, \mathbb{Q}) = 0 \hspace{.5cm} \text{ for all } x \in X.
\end{align*}
On the other hand, by Lemma \ref{lemma link}, we have $\pdef(X,1) \leq n-5$ if and only if
\begin{align}
    \Tilde{H}^i(U \setminus \{ x\}, \mathbb{Q}) = 0 \hspace{.5cm} \text{ for all } i \leq 3+\min \{ d_S, 1\} \text{ and all } x \in X.
\end{align}
We aim to show the equivalence of these conditions. Since $\lcdef(X) \leq n-4$, the only vanishing left to show is that of $H^4(U \setminus \{ x \}, \mathbb{Q})$, when $d_S = 1$. 

Let $\iota: Y \rightarrow X$ be a general hyperplane section through $x$. Then $H^4(U \setminus \{ x \}, \mathbb{Q}) \simeq H^2(V \setminus \{ x \}, \mathbb{Q})$ by \ref{eqn hypersurface strata}, where $V \subset Y$ is a small neighborhood of $x$. The latter cohomology group is exactly $\Pic^{\anloc}(Y,x)_\mathbb{Q}$ by the argument in Example \ref{example dt}. But $Y$ has rational locally analytically $\mathbb{Q}$-factorial singularities, so its local analytic Picard groups are torsion. Hence $\Pic^{\anloc}(Y,x)_\mathbb{Q} = 0$.
\end{example}


\begin{thebibliography}{9}

\bibitem[AH24]{weaklyratl}
Donu Arapura and Scott Hiatt. Differential Forms and Hodge Structures on Singular Varieties. \emph{arXiv preprint arXiv:2410.21007}.

\bibitem[BBD82]{bbd}
Alexander A. Beilinson, Joseph Bernstein, and Pierre Deligne. Faisceaux pervers.  In \emph{Analysis and topology on singular spaces, I (Luminy, 1981)}, volume 100 of \emph{Astérisque}, pages 5-171. Soc. Math. Franace, Paris, (1982).

\bibitem[BBL{$^+$}25]{bbl}
Bhargav Bhatt, Manuel Blickle, Gennady Lyubeznik, Anurag K. Singh, and Wenliang Zhang. Applications of perverse sheaves in commutative algebra. \emph{J. Reine Angew. Math.} {\bf 825} (2025), 81--138.

\bibitem[DT16]{dao takagi}
Hailong Dao and Shunsuke Takagi. On the relationship between depth and cohomological dimension \emph{Compos. Math.} {\bf 152} (2016), no. 4, 876--888.

\bibitem[DOR25]{dod}
Bradley Dirks, Sebasti{\'a}n Olano, and Debatiya Raychadhury. A Hodge theoretic generalization of Q-homology manifolds. \emph{arXiv preprint arXiv:2501.14065}.

\bibitem[DB81]{du bois}
Philippe Du Bois. Complexe de de Rham filtr{\'e} d'une vari{\'e}t{\'e} singuli{\`e}re. \emph{Bull. Soc. Math. France} {\bf 109} (1981), no. 1, 41--81.

\bibitem[Hi25]{hiatt}
Scott Hiatt. Differential forms on varieties with pre-$k$-rational singularities. \emph{arXiv preprint arXiv:2509.04382}.

\bibitem[HJ13]{h-topology}
Annette Huber and Clemens J{\"o}rder. Differential forms in the h-topology. \emph{Algebr. Geom.} {\bf 1} (2014), no. 4, 449--478.

\bibitem[KS06]{kashiwara}
Masaki Kashiwara and Pierre Schapira. Categories and Sheaves. \emph{Grundlehren der Mathematischen Wissenschaften}, {\bf 332} (2006). Springer-Verlag, Berlin (2006).

\bibitem[KS21]{kebekus schnell}
Stefan Kebekus and Christian Schnell. Extending holomorphic forms from the regular locus of a complex space to a resolution of singularities. \emph{J. Amer. Math. Soc.} {\bf 34} (2021), no. 2, 315--368.

\bibitem[KV25]{kim venkatesh}
Hyunsuk Kim and Sridhar Venkatesh. Lefschetz morphisms on singular cohomology and local cohomological dimension of toric varieties. \emph{arXiv preprint arXiv:2506.03026}.

\bibitem[KS16]{kovacs schwede}
S{\'a}ndor Kov{\'a}cs and Karl Schwede. Du Bois singularities deform. \emph{Minimal Models and extremal rays (Kyoto, 2011), Adv. Stud. in Pure Math., Math. Soc. Japan} {\bf 70} (2016), 49--65.

\bibitem[Ko12]{kollar}
J{\'a}nos Koll{\'a}r. Grothendieck--Lefschetz type theorems for the local Picard group. \emph{J. Ramanujan Math. Soc.} {\bf 28A} (2013), 267--285.

\bibitem[MP22]{local cohomology}
Mircea Musta{\c{t}}{\u{a}} and Mihnea Popa. Hodge filtration on local cohomology, Du Bois complex and local cohomological dimension. \emph{Forum Math. Pi} {\bf 10} (2022), e22.

\bibitem[Og73]{ogus}
Arthur Ogus. Local cohomological dimension of algebraic varieties. \emph{Ann. of Math.} {\bf 98} (1973), no. 2, 327--365.

\bibitem[Pa23]{park}
Sung Gi Park. Du Bois complex and extension of forms beyond rational singularities. \emph{arXiv preprint arXiv:2311.15159}.

\bibitem[PP25]{popa park lefschetz}
Sung Gi Park and Mihnea Popa. Hodge symmetry and Lefschetz theorems for singular varieties. \emph{arXiv preprint arXiv:2410.15638}.

\bibitem[PS20]{construction}
Matthias Paulsen and Stefan Schreieder. The construction problem for Hodge numbers modulo an integer. \emph{Algebra Number Theory} {\bf 13} (2019), no. 10, 2427--2434.

\bibitem[PS24]{popa shen}
Mihnea Popa and Wanchun Shen. Du Bois complexes of cones over singular varieties, local cohomological dimension, and K-groups. \emph{Rev. Roumaine Math. Pures Appl., L. B\u{a}descu memorial issue} {\bf 70} (2025), no. 1--2, 133--155.

\bibitem[PSV24]{psv}
Mihnea Popa, Wanchun Shen, and Anh Duc Vo. Injectivity and vanishing for the Du Bois complexes of isolated singularities. \emph{arXiv preprint arXiv:2409.18019, to appear in Algebra and Number Theory} (2024).

\bibitem[RSW21]{saito lcdef}
Thomas Reichelt, Morihiko Saito, and Uli Walther. Topological calculation of local cohomological dimension. \emph{J. Singul.} {\bf 26} (2023), 13--22.

\bibitem[Sa87]{saito intro}
Morihiko Saito. Introduction to mixed Hodge modules. \emph{Astérisque} {\bf 179--180} (1987), 145--162.

\bibitem[Sa88]{saito 88}
Morihiko Saito. Modules de Hodge polarisables. \emph{Publ. Res. Inst. Math. Sci.} {\bf 24} (1988), no. 6, 849--995.

\bibitem[Sa90]{saito 90}
Morihiko Saito. Mixed Hodge modules. \emph{Publ. Res. Inst. Math. Sci.} {\bf 26} (1990), no. 2, 221--333.

\bibitem[Sa00]{saito hodge complexes}
Morihiko Saito. Mixed Hodge complexes on algebraic varieties. \emph{Math. Ann.} {\bf 316} (2000), no. 2, 283--331.

\bibitem[Sc14]{schnell}
Christian Schnell. On Saito's vanishing theorem. \emph{Math. Res. Lett.} {\bf 23} (2016), no. 2, 499--527.

\bibitem[ST06]{siu}
Yum-Tong Siu and G{\"u}nther Trautmann. Gap-sheaves and extension of coherent analytic subsheaves, volume Vol. 172 of \emph{Lecture Notes in Mathematics}. Springer-Verlag, Berlin-New York, (1971).

\bibitem[SVV23]{shen venkatesh vo}
Wanchun Shen, Sridhar Venkatesh, and Anh Duc Vo. On $k$-Du Bois and $k$-rational singularities. \emph{arXiv preprint arXiv:2306.03977, to appear in Ann. Inst. Fourier}, (2023).


\end{thebibliography}
\end{document}